\documentclass[sn-mathphys,Numbered]{sn-jnl}
\usepackage[caption=false]{subfig}
\usepackage{graphicx}%
\usepackage{multirow}%
\usepackage{amsmath,amssymb,amsfonts}%
\usepackage{amsthm}%
\usepackage{mathrsfs}%
\usepackage[title]{appendix}%
\usepackage{xcolor}%
\usepackage{textcomp}%
\usepackage{manyfoot}%
\usepackage{booktabs}%
\usepackage{algorithm}%
\usepackage{algorithmicx}%
\usepackage{algpseudocode}%
\usepackage{listings}%




\newtheorem{theorem}{Theorem}
%

%
\newtheorem{remark}{Remark}%
\newtheorem{corollary}{Corollary}
\newtheorem{lemma}{Lemma}

\newtheorem{definition}{Definition}%
\newtheorem{assumption}{Assumption}

\usepackage{tikz}
\usetikzlibrary{decorations.pathreplacing}
\usepackage{circuitikz}
\usepackage{multirow}
\usepackage{color}
\definecolor{NewOrange}{RGB}{249,200,120}
\definecolor{NewBlue}{RGB}{140,185,237}
\definecolor{Newwhite}{RGB}{255,255,255}
\usepackage{subfig}
\usepackage{float}
\usepackage{multirow}
\usepackage{caption}

\usepackage[justification=centering]{caption}
\captionsetup[table]{labelsep=space}

\raggedbottom
\begin{document}
%
\title[Article Title]{Nash Equilibrium Seeking in Networked Games with Intermittent Communication}
%
%
\author{\fnm{Ying} \sur{Zhai}}\email{zhaiyinghit@163.com}

\author{\fnm{Rui} \sur{Yuan}}\email{yuanrui1580@gmail.com}

\author*{\fnm{Huan} \sur{Su}}\email{suhuan1981@163.com}

\affil{\orgdiv{Department of Mathematics}, \orgname{Harbin Institute of Technology}, \orgaddress{\street{Wenhua West Road}, \city{Weihai}, \postcode{264209}, \state{Shandong Province}, \country{China}}}

\abstract{This paper investigates the Nash equilibrium seeking problems for networked games with intermittent communication, where each player is capable of communicating with other players intermittently over a strongly connected and directed graph. Noticing that the players are not directly and continuously available for the actions of other players, this paper proposed an intermittent communication strategy. Compared with previous literature on intermittent communication, the players considered in this paper communicate with other players without quasi-periodic constraint. Instead, the players are supposed to estimate the actions of the other players with completely aperiodically intermittent communication. The distributions of communication time and silent time are characterized newly according to the concept of average communication ratio. And each player estimates other players' actions only during communication time. Finally, the validity of the theoretical results is demonstrated by two simulation examples.}

\keywords{Nash equilibrium seeking, Networked games, Aperiodically intermittent communication, Average communication ratio}
\maketitle
\section{Introduction}\label{sec1}

In the past few years, game theory has penetrated into various research areas, from robot networks, cognitive networks and smart grids, to economic markets (e.g., see \cite{ye2017distributed,frihauf2011nash,koshal2016distributed,ratliff2016characterization}). As one of the significant issues, Nash equilibrium seeking strategies for networked games have drawn some attention recently. With the incentive to solve Nash equilibrium seeking problems for networked games, many strategies have been exploited (e.g., see \cite{WOS,AII,Pu_2022,deng2021distributed,deng2022nash,li2014designing,ye2021distributed,stankovic2011distributed,ye2019rise} and the references therein).

It should be noted that games with limited information appear early and widely, but most of the existing works still concentrate on avoiding full communication among the players. In \cite{koshal2016distributed}, games with limited information flow were proposed. The authors in \cite{ye2017distributed} formulated distributed Nash equilibrium seeking as a leader-following consensus problem for noncooperative games. A hybrid gradient search algorithm was used for seeking Nash equilibrium with continuous-time communication in \cite{ye2020distributed}. The second-order integrator-type players were studied in \cite{ye2021distributed} for games with velocity estimation. An event-triggered law was utilized to design Nash equilibrium strategies for noncooperative games \cite{zhang2021distributed}. The adaptive approaches developed in \cite{ye2021adaptive} allows each player to adjust its own weight on its procurable consensus error dynamically. However, limited information includes not only local communication between different players, but also intermittent communication. In fact, the information transmission among the players may be intermittent. For example, players are not easy to get the full information from other players in the situation of constrained resource. Therefore, it is more realistic and reasonable to take the resource-constrained circumstances into consideration when designing a Nash equilibrium seeking strategy. To this end, the authors solved the noncooperative game based on a distributed discrete-time algorithm in \cite{zhang2021nash}, where each player estimates and exchanges the estimate information only when the event-triggered conditions are satisfied. In addition, extremum seeking methods were presented to seek the Nash equilibrium in \cite{krilavsevic2021learning,poveda2017framework,liu2011stochastic}. Although the methods stated above vary from a continuous-time algorithm \cite{krilavsevic2021learning}, to a discrete-time algorithm \cite{ye2020extremum}. A common characteristic of these algorithms is transmitting the information all the time.

Actually, in \cite{wang2021limited}, the authors have tried to determine the impacts of periodically intermittent communication on the limited-budget consensus problem. The authors in \cite{wang2018cooperative} showed that the consensus problem for a class of multi-agent systems with periodically intermittent communication could be cast into the stabilization problem of a set of simpler state-switching systems. In fact, communication period and communication time width have fixed constants in \cite{wang2021limited} and \cite{wang2018cooperative}, which is not flexible enough. Furthermore, the method in \cite{chen2022finite} was designed for an autonomous underwater vehicle system with aperiodically intermittent communication. The constrained consensus protocol for multi-agent systems with aperiodically intermittent communication was proposed in \cite{wen2014distributed}. Similarly, although intermittent communication in \cite{chen2022finite} and \cite{wen2014distributed} is aperiodic, it is supposed to satisfy the quasi-periodicity assumption, which is unreasonable. In a word, most of the existing literature about aperiodically intermittent communication require each communication time width should be larger than or equal to a common constant to ensure the convergence of studied systems, which can be summarized as follows.

\begin{itemize}
\item[1)] Based on the quasi-periodicity assumption, both communication period and communication time are limited by constants. (e.g., see \cite{chen2022finite,wen2014distributed,liu2015synchronization})

\item[2)] Each distribution of communication time is characterized by a common minimum communication ratio. (e.g., see \cite{liu2014synchronization,dong2022intermittent})
\end{itemize}

In this regard, the methods mentioned above are not completely aperiodic. Moreover, these methods are conservative due to uniformly distributed communication time on the one hand. On the other hand, they are difficult to implement in engineering systems where intermittent communication should be guaranteed. In fact, the communication between players should be available at any time and for any durations. In order to solve games with intermittent communication, in this paper, two traditional cases of intermittent communication are used to design Nash equilibrium seeking strategies. Furthermore, the average communication ratio is leveraged for characterizing the distributions of communication time and silent time. That is to say, the objective of this paper is to make the lower bound of some communication time be smaller than methods in \cite{chen2022finite,liu2015synchronization}, the upper bound of some communication period be larger than methods in \cite{wen2014distributed}, which is less conservative compared with 1). The proportion of silent time can be any value in $(0,1)$, which is more reasonable compared with 2).

The main idea of this paper is to investigate networked games with intermittent communication over a strongly connected and directed graph, as well as developing a novel intermittent communication strategy to seek Nash equilibrium. Compared with the existing works, the core contributions and novelties of this paper are summarized as follows.

\begin{itemize}
\item[1)] A theoretical framework for Nash equilibrium seeking strategy for networked games with intermittent communication is established. The games with limited information are quite comprehensive that they involve intermittent communication and avoid full communication among the players simultaneously.
\item[2)] This paper reveal that the convergence property of networked games can be achieved by proposed strategy. Moreover, the proposed strategy requires that the players estimate information with other players only during communication time. As such, the proposed strategy is more cost effective.
\item[3)] An aperiodically intermittent communication strategy is designed to realize the seeking of Nash equilibrium. It characterizes the distributions of communication time and silent time in a new form by the concept of average communication ratio, which relaxes the constraints on communication period and communication time.

\end{itemize}

The organization of this paper is as follows. Section \ref{sec2} offers preliminary knowledge and formulates the networked game. Section \ref{sec3} shows three Nash equilibrium seeking strategies for the networked game with intermittent communication. Section \ref{sec4} gives a simulation example to ensure the feasibility of the proposed strategy. Finally, Section \ref{sec5} summarizes the conclusion.

\emph{Notations:}
$\mathbb{R}$ and $\mathbb{R}^{N}$ are the set of real numbers and the $N$-dimensional Euclidean space, respectively. $\mathbf{0}_{N\times N}$ and $I_N$ represent $N\times N$ matrix with all elements equal to $0$ and $N\times N$ identity matrix, respectively. $\mathbf{0}_{N}$ and $\mathbf{1}_N$ represent $N$-dimensional column vector with all elements equal to $0$ and $1$, respectively. $\mathrm{diag}\{\cdot\}$ denotes a block-diagonal matrix. The symbol ``$\otimes$" is Kronecker product. The symbol ``$\ast$" indicates a symmetric structure in matrix expressions. Given a vector or matrix, $|\cdot|$ is the Euclidean norm of the vector, while $\Vert\cdot\Vert$ is the induced matrix norm. For a symmetric matrix $A$, $A > 0$ means that $A$ is a positive definite matrix. $\lambda_{\max}(A)$(respectively, $\lambda_{\min}(A)$) are the maximum (respectively, minimum) eigenvalue of $A$.

\section{Problem Fomulation}\label{sec2}
\subsection{Graph Theory}
Consider the networked game with $N$ players which are equipped with a communication graph $\mathcal{G}=(\mathcal{N},\mathcal{E})$, in which $\mathcal{N}=\{1, 2, \cdots, N\}$ is the vertex set of players, $\mathcal{E}\subset\mathcal{N}\times\mathcal{N}$ is the associated edge set. $\mathcal{A}=(a_{ij})_{N\times N}$ is the adjacency matrix where $a_{ij}>0$ if node $(j,i)\in\mathcal{E}$, together with $a_{ij}=0$ if node $(j,i)\notin\mathcal{E}$. Accordingly, the Laplacian matrix of $\mathcal{G}$ is $\mathcal{L}=\mathcal{D}-\mathcal{A}$ with $\mathcal{D}=\mathrm{diag}\{d_1^{in}, d_2^{in}, \cdots, d_N^{in}\}$, where $d_i^{in}=\sum_{j=1}^N a_{ij}$. Moreover, this paper considers that $\mathcal{G}$ is strongly connected in the sense that each node has at least one path to the other nodes. In addition, we consider that any self-loops are not allowed herein, i.e., $a_{ii}=0$, for $i\in\mathcal{N}$. With the strongly connected and directed graph $\mathcal{G}$, the following assumption and lemma can be used in the rest of this paper.

\begin{assumption}\label{ass1}
The directed graph $\mathcal{G}$ is strongly connected.
\end{assumption}

\begin{lemma}\label{lm1}(\cite{zhang2021distributed})
Under Assumption \ref{ass1}, there exist two positive definite matrices $P=\mathrm{diag}\{p_i\}$ and $Q$, such that for $i,j\in\mathcal{N}$
\begin{equation*}
    (\mathcal{L} \otimes I_N + \mathcal{B})^{\top}P+P(\mathcal{L} \otimes I_N + \mathcal{B})=Q,
\end{equation*}
where $\mathcal{B}=\mathrm{diag}\{a_{ij}\}$.
\end{lemma}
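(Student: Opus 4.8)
The plan is to recognize the asserted identity as the statement that the matrix $\mathcal{M}:=\mathcal{L}\otimes I_N+\mathcal{B}$ admits a positive diagonal Lyapunov solution, and then to obtain this from the M-matrix structure induced by strong connectivity. First I would record the sign pattern of $\mathcal{M}$: since $\mathcal{L}=\mathcal{D}-\mathcal{A}$ has off-diagonal entries $-a_{ij}\le 0$ (and $a_{ii}=0$) while $\mathcal{B}$ is a nonnegative diagonal matrix, $\mathcal{M}$ has nonpositive off-diagonal entries, i.e. it is a $Z$-matrix. Consequently, by the classical theory of M-matrices it suffices to prove that $\mathcal{M}$ is positive stable (every eigenvalue lies in the open right half-plane): a positive stable $Z$-matrix is a nonsingular M-matrix, and every nonsingular M-matrix admits a positive diagonal $P$ for which $\mathcal{M}^{\top}P+P\mathcal{M}$ is positive definite; one then simply sets $Q:=\mathcal{M}^{\top}P+P\mathcal{M}$.

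The core step is therefore the positive stability of $\mathcal{M}$, and this is where Assumption~\ref{ass1} enters. Strong connectivity of $\mathcal{G}$ makes $\mathcal{A}$ irreducible, so by the Perron--Frobenius theorem $\mathcal{L}$ has a simple eigenvalue at $0$ with a strictly positive left eigenvector $\xi\gg 0$, $\xi^{\top}\mathcal{L}=0$, and all remaining eigenvalues have positive real part (the Gershgorin discs of $\mathcal{L}$ lie in the closed right half-plane and meet the imaginary axis only at the origin); tensoring with $I_N$ preserves this spectrum with multiplicities, so $\mathcal{L}\otimes I_N$ is positive semistable with a zero eigenvalue. It remains to show that the nonnegative, nonzero diagonal perturbation $\mathcal{B}$ moves this eigenvalue strictly into the open right half-plane. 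I would argue this by choosing $s>0$ large enough that $\mathcal{S}:=sI-\mathcal{M}$ is entrywise nonnegative and — inheriting irreducibility from $\mathcal{A}$ through strong connectivity — irreducible; Perron--Frobenius then yields a positive right eigenvector for $\rho(\mathcal{S})$, and a row-sum comparison shows $\rho(\mathcal{S})<s$ precisely because the rows of $\mathcal{M}$ meeting the positive entries of $\mathcal{B}$ are strictly diagonally dominant and strong connectivity propagates this strict dominance over the whole irreducible block. Hence every eigenvalue of $\mathcal{M}$ has the form $s-\mu$ with $|\mu|\le\rho(\mathcal{S})<s$, so $\mathrm{Re}(s-\mu)>0$.

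With $\mathcal{M}$ now established as a nonsingular M-matrix, I would finish by producing $P$ explicitly: there exists $w\gg 0$ with $\mathcal{M}^{\top}w\gg 0$ (a standard M-matrix property), and for a suitable rescaling the choice $P=\mathrm{diag}\{w\}$ makes $\mathcal{M}^{\top}P+P\mathcal{M}$ strictly row-diagonally dominant with positive diagonal entries, hence symmetric positive definite; taking $Q:=\mathcal{M}^{\top}P+P\mathcal{M}$ then delivers both matrices required by the lemma. (Alternatively, $P$ can be built directly from the left Perron eigenvector of $\mathcal{S}$.)

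The step I expect to be the main obstacle is the middle one — verifying that the perturbation $\mathcal{B}$ actually renders $\mathcal{M}$ positive stable rather than merely positive semistable. Assumption~\ref{ass1} is indispensable here: one must couple the irreducibility coming from strong connectivity with the precise location of the positive entries of $\mathcal{B}$ so that strict diagonal dominance on a handful of rows spreads, through directed paths, to force $\rho(sI-\mathcal{M})<s$. Once positive stability is in hand, the existence of the diagonal $P$ and the positive definite $Q$ is a routine invocation of M-matrix and Lyapunov theory.
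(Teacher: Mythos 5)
The paper itself gives no proof of this lemma (it simply cites \cite{zhang2021distributed}), and your overall route --- show $\mathcal{M}:=\mathcal{L}\otimes I_N+\mathcal{B}$ is a nonsingular M-matrix and invoke diagonal Lyapunov stability of nonsingular M-matrices --- is the standard and correct way to establish it. However, your middle step contains a genuine error: $\mathcal{S}=sI-\mathcal{M}$ is \emph{not} irreducible, so Perron--Frobenius with a positive eigenvector and the claim that ``strong connectivity propagates strict dominance over the whole irreducible block'' do not apply to the full $N^2\times N^2$ matrix. Indeed, $\mathcal{L}\otimes I_N$ is the Laplacian of $N$ disjoint copies of $\mathcal{G}$ (one copy per second index $j$), and adding the diagonal matrix $\mathcal{B}$ creates no coupling between copies; under the permutation that groups the entries $y_{ij}$ by $j$, $\mathcal{M}$ is similar to the direct sum $\bigoplus_{j=1}^{N}\bigl(\mathcal{L}+\mathrm{diag}\{a_{1j},\dots,a_{Nj}\}\bigr)$, which is reducible for $N\ge 2$.

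The repair keeps your strategy but runs it blockwise. Each block $\mathcal{L}+B_j$ with $B_j=\mathrm{diag}\{a_{1j},\dots,a_{Nj}\}$ has the same off-diagonal zero pattern as $\mathcal{L}$, hence is irreducible by strong connectivity of $\mathcal{G}$; moreover $B_j\neq 0$ because strong connectivity forces node $j$ to have at least one out-neighbor $i$, i.e.\ $a_{ij}>0$ --- this per-block nonvanishing is exactly what your argument must verify and currently does not. Within each block the row indexed by such an $i$ is strictly diagonally dominant (its row sum equals $a_{ij}>0$ since $\mathcal{L}$ has zero row sums), and irreducibility lets the strict dominance propagate, giving $\rho(sI-(\mathcal{L}+B_j))<s$ and hence positive stability of every block; a direct sum of nonsingular M-matrices is a nonsingular M-matrix, and a direct sum of (diagonal) Lyapunov solutions is again diagonal, so the conclusion for $\mathcal{M}$ follows. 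Your final construction is then fine: taking $w\gg 0$ with $\mathcal{M}^{\top}w\gg 0$ and $P=\mathrm{diag}\{w\}$, the symmetric Z-matrix $\mathcal{M}^{\top}P+P\mathcal{M}$ has $i$-th row sum $(\mathcal{M}^{\top}w)_i+w_i\,(\text{row sum of }\mathcal{M})_i>0$ because the row sums of $\mathcal{M}$ are nonnegative, so it is strictly diagonally dominant with positive diagonal and therefore positive definite, and $Q:=\mathcal{M}^{\top}P+P\mathcal{M}$ is as required.
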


\subsection{Game Theory}
In the networked game, each player purposes to\\[0.2cm]
$~~~~~~~~~~~~~~~~~~~~~~~~~~\max_{x_i}~~f_i(\mathbf{x})$,\\[0.2cm]
where $x_i\in\mathbb{R}$ and $f_i(\mathbf{x})$ are the action and the payoff function of player $i$, respectively. This paper aims to design Nash equilibrium seeking strategy under three cases of intermittent communication, so that the players' actions can be convergent at the Nash equilibrium in the end. The definition is given as follows.

\begin{definition}\label{dn1}
Let $\mathbf{x}=(x_1,x_2,\cdots,x_N)^{\top}\in\mathbb{R}^{N}$ be the players' actions. $\mathbf{x}^*=(x_i^*,\mathbf{x}_{-i}^*)$ is called a Nash equilibrium of the game if
\begin{equation*}
    f_i(x_i,\mathbf{x}_{-i}^*) \leq f_i(x_i^*,\mathbf{x}_{-i}^*),
\end{equation*}
where for $i\in\mathcal{N}$, $\mathbf{x}_{-i}=(x_1,\cdots,x_{i-1},x_{i+1},\cdots,x_N)^{\top}$.
\end{definition}

Note that $f_i(x_i,\mathbf{x}_{-i})$ can be alternatively denoted as $ f_i(\mathbf{x})$ by rewriting $(x_i,\mathbf{x}_{-i})$ to $\mathbf{x}$ in this paper. Based on Definition \ref{dn1}, it is also revealed that no player can further increase its payoff by only changing its own action. The following assumptions and lemma will be utilized for the Nash equilibrium exploration.

\begin{assumption}\label{ass2}
Let $\mathcal{C}^2$ be the family of all continuous functions, which are twice-continuously differentiable in their own arguments. The payoff function of each player $f_i(\mathbf{x})\in\mathcal{C}^2$, for $i\in\mathcal{N}$.
\end{assumption}

\begin{assumption}\label{ass3}
For $i\in\mathcal{N}$, if $\frac{\partial f_i({\mathbf{x}})}{\partial x_i}$ satisfies that
\begin{equation*}
    \left|\frac{\partial f_i({\mathbf{x}})}{\partial x_i}-\frac{\partial f_i({\mathbf{z}})}{\partial z_i}\right|\leq\alpha|\mathbf{x}-\mathbf{z}|,
\end{equation*}
where $\mathbf{x}$, $\mathbf{z}\in\mathbb{R}^{N}$. $\frac{\partial f_i({\mathbf{x}})}{\partial x_i}$ is globally Lipschitz with constant $\alpha>0$.
\end{assumption}

\begin{assumption}\label{ass4}
For $i\in\mathcal{N}$, the payoff function of each player $f_i(\mathbf{x})$ is concave in $x_i$. Furthermore, for $\mathbf{x}$, $\mathbf{z}\in\mathbb{R}^{N}$,
\begin{equation*}
    (\mathbf{x}-\mathbf{z})^{\top}\left(\frac{\partial G}{\partial\mathbf{x}}(\mathbf{x})-\frac{\partial G}{\partial\mathbf{z}}(\mathbf{z})\right)\leq-\beta|\mathbf{x}-\mathbf{z}|^2,
\end{equation*}
where $\frac{\partial G}{\partial\mathbf{x}}(\mathbf{x})=(\frac{\partial f_1(\mathbf{x})}{\partial x_1},\frac{\partial f_2(\mathbf{x})}{\partial x_2},\cdots,\frac{\partial f_N(\mathbf{x})}{\partial x_N})^{\top}$, and constant $\beta>0$.
\end{assumption}

\begin{remark}\label{rk1}
In fact, Assumptions \ref{ass2}-\ref{ass4} ensure the characterizations on the existence and uniqueness of the Nash equilibrium, which are common in the existing works (e.g., see \cite{rosen1965existence,ye2016game,ye2021adaptive}). In addition, $\frac{\partial G}{\partial\mathbf{x}}(\mathbf{x}^*)=\mathbf{0}_{N}$ is called the stationary condition, if and only if $\mathbf{x}=\mathbf{x}^*$.
\end{remark}

For the convenience of the following description of lemma, let
\begin{equation*}
    \dot{x}_i=\bar{k}_i\frac{\partial f_i(\mathbf{x})}{\partial x_i},
\end{equation*}
which is called auxiliary function for $i\in\mathcal{N}$.
\begin{lemma}\label{lm2}(\cite{ye2017distributed})
Under Assumptions \ref{ass2}-\ref{ass4}, there exist some positive constants $\gamma_j$ $(j=1,2,3,4,5)$ such that
\begin{equation*}
\begin{split}
\gamma_1|\mathbf{e}|^2\leq\bar{V}(\mathbf{e})\leq&\gamma_2|\mathbf{e}|^2,\\
\left(\frac{\partial\bar{V}(\mathbf{e})}{\partial\mathbf{e}}\right)^{\top}\left(\mathrm{diag}\{\bar{k}_i\}\frac{\partial G}{\partial \mathbf{x}}(\mathbf{x})\right)\leq&-\gamma_3|\mathbf{e}|^2,\\
\left|\frac{\partial\bar{V}(\mathbf{e})}{\partial\mathbf{e}}\right|\leq&\gamma_4|\mathbf{e}|,
\end{split}
\end{equation*}
where $\bar{V}:\mathbb{D}\longmapsto\mathbb{R}$ is a function with $\mathbb{D}=\{\mathbf{x}\in\mathbb{R}^{N}|~|\mathbf{e}|\leq \gamma_5\}$, and $\mathbf{e}=\mathbf{x}-\mathbf{x}^*$.
\end{lemma}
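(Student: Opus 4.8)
The plan is to produce an explicit quadratic Lyapunov function for the auxiliary system $\dot{\mathbf{x}}=\mathrm{diag}\{\bar{k}_i\}\frac{\partial G}{\partial\mathbf{x}}(\mathbf{x})$, written in the error coordinate $\mathbf{e}=\mathbf{x}-\mathbf{x}^*$, rather than invoking an abstract converse Lyapunov theorem; with the standing assumption that the gains satisfy $\bar{k}_i>0$, this keeps all of $\gamma_1,\dots,\gamma_5$ fully computable. First I would rewrite the dynamics as $\dot{\mathbf{e}}=\mathrm{diag}\{\bar{k}_i\}\frac{\partial G}{\partial\mathbf{x}}(\mathbf{e}+\mathbf{x}^*)$ and combine the stationary condition $\frac{\partial G}{\partial\mathbf{x}}(\mathbf{x}^*)=\mathbf{0}_N$ from Remark~\ref{rk1} with the strong monotonicity in Assumption~\ref{ass4} to extract the key one-sided estimate $\mathbf{e}^{\top}\frac{\partial G}{\partial\mathbf{x}}(\mathbf{x})=(\mathbf{x}-\mathbf{x}^*)^{\top}\big(\frac{\partial G}{\partial\mathbf{x}}(\mathbf{x})-\frac{\partial G}{\partial\mathbf{x}}(\mathbf{x}^*)\big)\leq-\beta|\mathbf{e}|^2$.

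Next I would take the candidate $\bar{V}(\mathbf{e})=\tfrac{1}{2}\mathbf{e}^{\top}\mathrm{diag}\{\bar{k}_i^{-1}\}\mathbf{e}$ and verify the three inequalities by direct computation. The two-sided bound is immediate from $\tfrac{1}{2}\lambda_{\min}(\mathrm{diag}\{\bar{k}_i^{-1}\})|\mathbf{e}|^2\leq\bar{V}(\mathbf{e})\leq\tfrac{1}{2}\lambda_{\max}(\mathrm{diag}\{\bar{k}_i^{-1}\})|\mathbf{e}|^2$, so one sets $\gamma_1=(2\max_i\bar{k}_i)^{-1}$ and $\gamma_2=(2\min_i\bar{k}_i)^{-1}$. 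Since $\frac{\partial\bar{V}(\mathbf{e})}{\partial\mathbf{e}}=\mathrm{diag}\{\bar{k}_i^{-1}\}\mathbf{e}$, the weighting is chosen precisely so that $\mathrm{diag}\{\bar{k}_i^{-1}\}\mathrm{diag}\{\bar{k}_i\}=I_N$, whence $\big(\frac{\partial\bar{V}(\mathbf{e})}{\partial\mathbf{e}}\big)^{\top}\mathrm{diag}\{\bar{k}_i\}\frac{\partial G}{\partial\mathbf{x}}(\mathbf{x})=\mathbf{e}^{\top}\frac{\partial G}{\partial\mathbf{x}}(\mathbf{x})\leq-\beta|\mathbf{e}|^2$, giving $\gamma_3=\beta$; and $\big|\frac{\partial\bar{V}(\mathbf{e})}{\partial\mathbf{e}}\big|=|\mathrm{diag}\{\bar{k}_i^{-1}\}\mathbf{e}|\leq(\min_i\bar{k}_i)^{-1}|\mathbf{e}|$, giving $\gamma_4=(\min_i\bar{k}_i)^{-1}$. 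Because Assumption~\ref{ass4} is posed globally, every estimate holds on all of $\mathbb{R}^N$, so $\gamma_5$ may be chosen as any positive constant (or, if one wants a local statement consistent with later results, restricted to any bounded neighborhood). Assumptions~\ref{ass2}--\ref{ass3} are what make $\frac{\partial G}{\partial\mathbf{x}}$ continuously differentiable and globally Lipschitz, hence guarantee that the auxiliary system is well posed so that these Lyapunov estimates are meaningful.

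There is no real obstacle along this route; the only point that needs care is the bookkeeping that makes the derivative inequality scale-free — matching the weights $\bar{k}_i^{-1}$ to the gains $\bar{k}_i$ so that the monotonicity constant $\beta$ passes through unchanged. As an alternative (closer to the treatment in \cite{ye2017distributed}), one could first observe that the estimate $\mathbf{e}^{\top}\frac{\partial G}{\partial\mathbf{x}}(\mathbf{x})\leq-\beta|\mathbf{e}|^2$ already renders $\mathbf{e}=\mathbf{0}$ exponentially stable for the auxiliary system and then invoke the converse Lyapunov theorem for exponentially stable systems with Lipschitz vector field to produce $\bar{V}$ on a neighborhood $\mathbb{D}$ with exactly these three properties; however, since the explicit quadratic above is itself precisely such a function and yields closed-form $\gamma_j$, I would present the constructive version and only mention the converse-theorem argument as a remark.
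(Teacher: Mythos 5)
Your construction is correct, but it takes a genuinely different route from the paper: the paper offers no proof of Lemma~\ref{lm2} at all, simply importing it from \cite{ye2017distributed}, where the function $\bar{V}$ is obtained non-constructively by noting that the auxiliary pseudo-gradient system $\dot{\mathbf{x}}=\mathrm{diag}\{\bar{k}_i\}\frac{\partial G}{\partial\mathbf{x}}(\mathbf{x})$ is exponentially stable at $\mathbf{x}^*$ and invoking a converse Lyapunov theorem for exponentially stable systems with Lipschitz right-hand side (this is the alternative you correctly relegate to a remark, and it is where Assumption~\ref{ass3} and the local domain $\mathbb{D}$ with radius $\gamma_5$ in the statement come from). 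Your explicit choice $\bar{V}(\mathbf{e})=\tfrac12\mathbf{e}^{\top}\mathrm{diag}\{\bar{k}_i^{-1}\}\mathbf{e}$ verifies all three inequalities cleanly: the two-sided bound gives $\gamma_1=(2\max_i\bar{k}_i)^{-1}$, $\gamma_2=(2\min_i\bar{k}_i)^{-1}$; the weight $\mathrm{diag}\{\bar{k}_i^{-1}\}$ cancels the gain matrix so that Assumption~\ref{ass4} with $\mathbf{z}=\mathbf{x}^*$, together with the stationarity $\frac{\partial G}{\partial\mathbf{x}}(\mathbf{x}^*)=\mathbf{0}_N$ (which indeed follows from concavity and the Nash property, as noted in Remark~\ref{rk1}), yields $\gamma_3=\beta$; and $\gamma_4=(\min_i\bar{k}_i)^{-1}$ is immediate. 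What your constructive version buys is explicit, computable constants $\gamma_j$ (which then propagate into $\pi_1$, $\pi_2$, $\epsilon^*$, $\mu_1$, $\mu_2$ in Theorems~\ref{tm1}--\ref{tm3}) and validity on all of $\mathbb{R}^N$ rather than only on a neighborhood $\mathbb{D}$, so $\gamma_5$ is arbitrary; what the converse-theorem route buys is generality (no candidate needs to be guessed) at the price of non-explicit constants and a merely local domain. Both arguments are sound, and your bookkeeping contains no gap.
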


\subsection*{C. Intermittent Communication}
In many practical applications, due to network bandwidth, equipment failures, technical constraints, communication costs and other reasons, the players is not communicating all the time, and there may be interrupted or intermittent communication transmission mode. For example, intermittent power supply, intermittent communication packet loss, Intermittent heating or cooling, and so on. Therefore, how to realize the seeking of Nash equilibrium in the networked game constrained by intermittent communication has become the focus of research.

This paper aims to design Nash equilibrium seeking strategies with intermittent communication, so that the players can estimate information with other players only during communication time. Actually, two kinds of intermittent communication laws have been proposed (e.g., see \cite{wang2018cooperative,wang2021limited,wen2014distributed,chen2022finite}). Inspired by this, the networked game with intermittent communication is proposed in this paper for the first time. Consider the auxiliary system, for $i,j\in\mathcal{N}$,
\begin{equation*}
\left\{
\begin{aligned}
\dot{y}_{ij}(t)=& -\left(\sum_{k=1}^{N}a_{ik}(y_{ij}-y_{kj})+a_{ij}(y_{ij}-x_j)\right),t\in[t_m,s_m),\\
\dot{y}_{ij}(t)=& ~0,t\in[s_m,t_{m+1}),
\end{aligned}
\right.
\end{equation*}
where $t_0=0$, $[t_m,s_m)$ is called the $m$-th communication time, $[s_m,t_{m+1})$ is called the $m$-th silent time, $m=\{0,1,2,\cdots\}$.
\begin{itemize}
  \item[$\bullet$] periodically intermittent communication (PIC)
\begin{enumerate}
\item If $s_m-t_m=\theta$, $t_{m+1}-t_m=T$, where $0<\theta<T<\infty$, then intermittent communication law becomes PIC as studied in \cite{wang2018cooperative}.
\end{enumerate}
 \item[$\bullet$] aperiodically intermittent communication (AIC)
\begin{enumerate}
\item (quasi-periodicity assumption, e.g., see \cite{chen2022finite,wen2014distributed,liu2015synchronization}) $\inf_m\{s_m-t_m\}=\theta$, $\sup_m\{t_{m+1}-t_m\}=T$, where $0<\theta<T<\infty$.
\item (minimum communication ratio assumption, e.g., see \cite{liu2014synchronization,dong2022intermittent}) $\limsup_{m\rightarrow\infty}\{\frac{t_{m+1}-s_m}{t_{m+1}-t_m}\}=\zeta\in(0,1)$.
\end{enumerate}
\end{itemize}

\begin{remark}\label{rk2}
From the quasi-periodicity assumption, it is not difficult to find that PIC is a special case of AIC. This also implies that the above assumptions are imposed to restrict the width and ratio of each communication time, see Fig. \ref{paic}. In \cite{zhai2021stabilization}, we have shown that $s_m-t_m$ can take any value in $(0,1)$ such that the system is globally asymptotically stable. However, it is challenging for networked games with intermittent communication to realize the seeking of Nash equilibrium.
\end{remark}

\begin{figure}[!htb]
\begin{center} 
	\subfloat[AIC considered in \cite{chen2022finite} and \cite{wen2014distributed}]{
	\begin{tikzpicture}[line width =0.7pt][->=0.01pt,>=stealth'=0.01pt,shorten >=0.01pt,auto,node distance=0.1cm,thick=0.1pt,main node/.style={circle=0.01pt,fill=blue,draw=0.1pt,			 font=0.1pt,minimum size=10pt}]
		\filldraw[fill=NewBlue] (0,0) rectangle (2,0.5) ;
		\draw (0,0)node[below] {$t_{m-1}$};
		\filldraw[fill=Newwhite] (2,0) rectangle (3,0.5) ;
		\draw (2,-0.06) node[below] {$s_{m-1}$};
		\filldraw[fill=NewBlue] (3,0) rectangle (4.5,0.5) ;
		\draw (3,0) node[below] {$t_{m}$};
		\filldraw[fill=Newwhite] (4.5,0) rectangle (5.8,0.5) ;
		\draw (4.5,-0.06)node[below] {$s_{m}$};
		\filldraw[fill=NewBlue] (5.8,0) rectangle (7,0.5) ;
		\draw (5.8,0)node[below] {$t_{m+1}$};
		\filldraw[fill=Newwhite] (7,0) rectangle (8,0.5) ;
		\draw (7,-0.06)node[below] {$s_{m+1}$};
		\draw (8,0)node[below] {$t_{m+2}$};
		\filldraw[fill=NewBlue] (0,1.05) rectangle (1,1.55) ;
		\filldraw[fill=Newwhite] (5,1.05) rectangle (6,1.55) ;
		\node[] at (2.7,1.3) {communication time};
		\node[] at (7,1.3) {silent time};
	\end{tikzpicture}
}

   \subfloat[AIC presented in this paper]{
    \begin{tikzpicture}[line width =0.7pt][->=0.01pt,>=stealth'=0.01pt,shorten >=0.01pt,auto,node distance=0.1cm,thick=0.1pt,main node/.style={circle=0.01pt,fill=blue,draw=0.1pt,			 font=0.1pt,minimum size=10pt}]
    	\filldraw[fill=NewBlue] (0,0) rectangle (2.7,0.5) ;
    	\draw (0,0)node[below] {$t_{m-1}$};
    	\filldraw[fill=Newwhite] (2.7,0) rectangle (3.6,0.5) ;
    	\draw (2.7,-0.06) node[below] {$s_{m-1}$};
    	\filldraw[fill=NewBlue] (3.6,0) rectangle (3.9,0.5) ;
    	\draw (3.5,0) node[below] {$t_{m}$};
    	\filldraw[fill=Newwhite] (3.9,0) rectangle (5.2,0.5) ;
    	\draw (4,-0.06)node[below] {$s_{m}$};
    	\filldraw[fill=NewBlue] (5.2,0) rectangle (6.8,0.5) ;
    	\draw (5.2,0)node[below] {$t_{m+1}$};
    	\filldraw[fill=Newwhite] (6.8,0) rectangle (8,0.5) ;
    	\draw (6.8,-0.06)node[below] {$s_{m+1}$};
    	\draw (8,0)node[below] {$t_{m+2}$};    		
    \end{tikzpicture}
}
\caption{Sketch map of intermittent communication.}\label{paic}
\end{center} 
\end{figure}
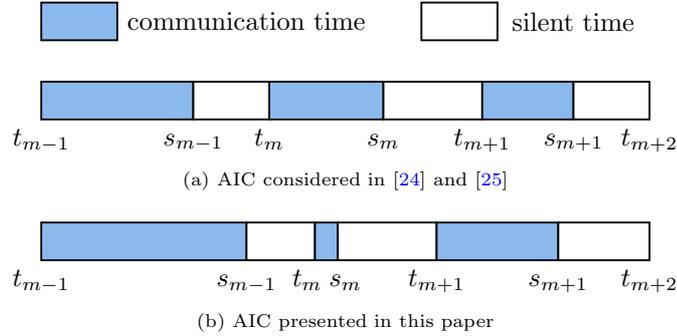

\begin{remark}\label{rk3}
To reduce the communication cost, the Nash equilibrium seeking strategy was presented in \cite{zhang2021distributed} by event-triggered communication, which only requires the communication signal to be updated when the triggering conditions are satisfied. In short, let $\theta\rightarrow T$ in PIC, and $T$ is the minimum triggering interval. That is to say, the communication time among players exists all the time. Different from \cite{zhang2021distributed,zhang2021nash}, the intermittent communication strategy allows the communication to disappear during some non-zero intervals, i.e., silent time.
\end{remark}

Motivated by the intention of relaxing the restrictions on AIC, this paper aims to devise a novel intermittent communication strategy from the average sense to achieve seeking of the Nash equilibrium. Subsequently, the following definition are introduced to illustrate average communication ratio (ACR).

\begin{definition}\label{dn2}
For AIC, suppose that there exists a constant $\vartheta\in(0,1)$ such that
\begin{equation*}
\begin{split}
    &\mathcal{M}(t,s)\geq\vartheta(t-s),\\
    &\mathcal{M}^{\mathrm{c}}(t,s)=t-s-\mathcal{M}(t,s)\geq(1-\vartheta)(t-s),
\end{split}
\end{equation*}
where $0\leq s<t<\infty$, $\mathcal{M}(t,s)$ stands for the sum of total communication time width on $[s,t)$, $\mathcal{M}^{\mathrm{c}}(t,s)$ stands for the sum of total silent time width on $[s,t)$.
\end{definition}

In this regard, we say that AIC considered in this paper admits an ACR $\vartheta\in(0,1)$. Additionally, in order to avoid no communication input in $[s,t)$, the elastic coefficient can be introduced in Definition \ref{dn2}, which can be seen in \cite{wang2021stabilization} and \cite{wang2021stability}.

\begin{remark}\label{rk4}
On the basis of Definition \ref{dn2}, it is revealed that the lower bound of some communication time width can be very small, instead of a common constant, see Fig. \ref{paic}. Hence, ACR adapts to a wider range of networked games with intermittent communication. Then it is obvious that the AIC based on Definition \ref{dn2} admits the complete aperiodicity, which is more reasonable.
\end{remark}

\section{Main Results} \label{sec3}
In this section, three Nash equilibrium seeking strategies will be proposed with intermittent communication. More specifically, the case of PIC, AIC with quasi-periodicity assumption, AIC with ACR will be presented, successively. And the relevant convergence analysis will be supplied based on Lyapunov stability analysis.

Motivated by \cite{ye2017distributed} and \cite{zhang2021distributed}, the action of each player $i$ is steered by
\begin{equation}\label{s1}
    \dot{x}_i(t)=k_i\frac{\partial f_i}{\partial x_i}(\mathbf{y}_i(t)),~~~~i\in\mathcal{N},
\end{equation}
where $k_i=\epsilon \bar{k}_i$, $\epsilon>0$ is a small scalar and $\bar{k}_i>0$ are some constants for $i\in\mathcal{N}$. Moreover, $\mathbf{y}_i\in\mathbb{R}^N$ stands for the estimates on all players¡¯ actions by player $i$ and is defined as $\mathbf{y}_i=(y_{i1},y_{12},\cdots,y_{iN})^{\top}$. Based on intermittent communication, $y_{ij}$ denotes the estimate on player $j$'s action by player $i$, which is produced by
\begin{equation}\label{icl}
\left\{
\begin{aligned}
\dot{y}_{ij}(t)=& -\left(\sum_{k=1}^{N}a_{ik}(y_{ij}-y_{kj})+a_{ij}(y_{ij}-x_j)\right),t\in[t_m,s_m),\\
\dot{y}_{ij}(t)=& ~0,t\in[s_m,t_{m+1}),
\end{aligned}
\right.
\end{equation}
where for $i,j\in\mathcal{N}$, $a_{ij}$ is the element the adjacency matrix of $\mathcal{G}$.

\begin{remark}\label{rk5}
During the communication time $[t_m,s_m)$, $y_{ij}$ can update in real time based on the estimates on the players' actions. During the silent time $[s_m,t_{m+1})$, $y_{ij}$ remain unchanged. Unlike many
existing studies on games where the Nash equilibrium seeking strategy was designed to be continuous communication (e.g., see \cite{deng2021distributed,deng2022nash,ye2019rise}), the intermittent estimates of the players are applied. Furthermore, during the silent time, the derivative of $y_{ij}$ is zero, instead of $y_{ij}=0$, which can make the strategy converge faster compared with previous works. Additionally, we introduce the concept of ACR, which can relax the constraints on conventional AIC and thus is less conservative.
\end{remark}

Under intermittent communication law \eqref{icl}, the Nash equilibrium seeking strategy can be designed as follows:
\begin{equation}\label{s2}
\begin{split}
\dot{\mathbf{x}}=&~\epsilon\mathrm{diag}\{\bar{k}_i\}\frac{\partial f}{\partial \mathbf{x}}(\mathbf{y}),\\
\dot{\mathbf{y}}(t)=&\left\{\begin{split}
&-(\mathcal{L}\otimes I_N+\mathcal{B})\mathbf{y}+\mathcal{B}(\mathbf{1}_N\otimes \mathbf{x}),t\in[t_m,s_m),\\
&~\mathbf{0}_{N\times N},t\in[s_m,t_{m+1}),
\end{split}
\right.
\end{split}
\end{equation}
where $\mathcal{B}$ is defined in Lemma \ref{lm1} and $\mathbf{y}=(\mathbf{y}_1^{\top},\mathbf{y}_2^{\top},\cdots,\mathbf{y}_N^{\top})^{\top}$. In the following, unless otherwise specified, $\mathbf{e}=\mathbf{x}-\mathbf{x}^*$ is defined as the consensus error, $\mathbf{e}_{\mathbf{x}}=\mathbf{y}-\mathbf{1}_N\otimes \mathbf{x}$ is defined as the estimate error, and $\frac{\partial f}{\partial \mathbf{x}}(\mathbf{1}_N\otimes \mathbf{x})$ is abbreviated as $\frac{\partial f}{\partial \mathbf{x}}(\mathbf{x})$. In fact, $\frac{\partial f}{\partial \mathbf{x}}(\mathbf{x})$ is the same as $\frac{\partial G}{\partial\mathbf{x}}(\mathbf{x})$ in Assumption \ref{ass4}. Then \eqref{s2} can be rewritten as
\begin{equation}\label{s3}
\dot{\mathbf{e}}=\dot{\mathbf{x}}=\epsilon\mathrm{diag}\{\bar{k}_i\}\frac{\partial f}{\partial \mathbf{x}}(\mathbf{e}_{\mathbf{x}}+\mathbf{1}_N\otimes \mathbf{x}),
\end{equation}
and for $t\in[t_m,s_m)$,
\begin{equation}\label{s4}
\begin{split}
\dot{\mathbf{e}}_{\mathbf{x}}(t)=&~\dot{\mathbf{y}}-\mathbf{1}_N\otimes \dot{\mathbf{x}}\\
=&-(\mathcal{L}\otimes I_N+\mathcal{B})(\mathbf{e}_{\mathbf{x}}+\mathbf{1}_N\otimes \mathbf{x})+\mathcal{B}(\mathbf{1}_N\otimes \mathbf{x})\\
&-\mathbf{1}_N\otimes\left(\epsilon\mathrm{diag}\{\bar{k}_i\}\frac{\partial f}{\partial \mathbf{x}}(\mathbf{e}_{\mathbf{x}}+\mathbf{1}_N\otimes \mathbf{x})\right),
\end{split}
\end{equation}
for $t\in[s_m,t_{m+1})$,
\begin{equation}\label{s5}
\begin{split}
\dot{\mathbf{e}}_{\mathbf{x}}(t)=&~\dot{\mathbf{y}}-\mathbf{1}_N\otimes \dot{\mathbf{x}}\\
=&-\mathbf{1}_N\otimes\left(\epsilon\mathrm{diag}\{\bar{k}_i\}\frac{\partial f}{\partial \mathbf{x}}(\mathbf{e}_{\mathbf{x}}+\mathbf{1}_N\otimes \mathbf{x})\right).~~~~
\end{split}
\end{equation}

Note that $\mathcal{G}$ is directed and strongly connected, so $\mathcal{L}\mathbf{1}_N=\mathbf{0}_N$ (\cite{olfati2006flocking}). Hence, we focus on the following strategy, derived by \eqref{s3}-\eqref{s5},
\begin{equation}\label{s6}
\begin{split}
\dot{\mathbf{e}}&=\epsilon\mathrm{diag}\{\bar{k}_i\}\frac{\partial f}{\partial \mathbf{x}}(\mathbf{e}_{\mathbf{x}}+\mathbf{1}_N\otimes \mathbf{x}),\\
\dot{\mathbf{e}}_{\mathbf{x}}(t)
&=\left\{\begin{split}
&-\mathcal{H}\mathbf{e}_{\mathbf{x}}-\mathbf{1}_N\otimes\left(\epsilon\mathrm{diag}\{\bar{k}_i\}\frac{\partial f}{\partial \mathbf{x}}(\mathbf{e}_{\mathbf{x}}+\mathbf{1}_N\otimes \mathbf{x})\right),t\in[t_m,s_m),\\
&-\mathbf{1}_N\otimes\left(\epsilon\mathrm{diag}\{\bar{k}_i\}\frac{\partial f}{\partial \mathbf{x}}(\mathbf{e}_{\mathbf{x}}+\mathbf{1}_N\otimes \mathbf{x})\right),t\in[s_m,t_{m+1}),
\end{split}
\right.
\end{split}
\end{equation}
where $\mathcal{H}=\mathcal{L}\otimes I_N+\mathcal{B}$. For notational convenience, let
\begin{equation*}
\begin{split}
\pi_1=&~\alpha\max\{\bar{k}_i\}(\frac{\gamma_4}{2}+N\Vert P\Vert),\\
\pi_2=&~2\alpha\sqrt{N}\Vert P\Vert\max\{\bar{k}_i\},\\
\epsilon_1^*=&~\frac{\gamma_3\lambda_{\min}(Q)}{\gamma_3\pi_2+\pi_1^2},\\
\epsilon_2^*=&~\frac{\gamma_3^2\pi_2^2}{\pi_1^4},\\
\epsilon^*=&~\min\{\epsilon_1^*,\epsilon_2^*\},\\
\Xi=&~(|\mathbf{e}|,\Vert\mathbf{e}_{\mathbf{x}}\Vert)^{\top},\\
\Gamma_1=&
  \left(
    \begin{array}{cc}
      \epsilon\gamma_3 & -\epsilon\pi_1 \\
      * &  \lambda_{\min}(Q)-\epsilon\pi_2\\
    \end{array}
  \right),\\
\Gamma_2=&
  \left(
    \begin{array}{cc}
      \sqrt{\epsilon}\gamma_3 & \epsilon\pi_1 \\
      * &  \epsilon\pi_2\\
    \end{array}
  \right),
\end{split}
\end{equation*}
where $\alpha$, $\gamma_3$, $\gamma_4$ are constants defined in Section \ref{sec2}, and $P$, $Q$ are positive definite matrices defined in Lemma \ref{lm1}. Then the following results can be obtained.

\subsection{Periodically Intermittent Communication}
In this section, we propose a periodic condition to limit the communication time width of \eqref{icl}. Set $t_m=m\tilde{T}$, $s_m=(m+\tilde{\theta})\tilde{T}$, and $t_{m+1}=(m+1)\tilde{T}$ in \eqref{icl}-\eqref{s6}, where $\tilde{T}$ is the communication period and $\tilde{\theta}\in(0,1)$ is the communication ratio. Then, \eqref{icl} operates periodically, and the following theorem is presented.

\begin{theorem}\label{tm1}
Assume that Assumptions \ref{ass1}-\ref{ass4} hold and the players' actions are governed by \eqref{s1} and \eqref{icl}. There exists a scalar $\epsilon\in(0,\epsilon^*)$, such that \eqref{s2} has the exponentially stable equilibrium point $(\mathbf{x}^*,\mathbf{1}_N\otimes\mathbf{x}^*)$ under PIC with communication ratio $\tilde{\theta}$ satisfying $\mu_1\tilde{\theta}-\mu_2(1-\tilde{\theta})>0$.
\end{theorem}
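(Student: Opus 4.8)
The plan is to construct a common Lyapunov function for the coupled error system \eqref{s6} and track its behaviour separately over communication intervals $[t_m,s_m)$ and silent intervals $[s_m,t_{m+1})$, then stitch the pieces together using the periodic structure $t_m=m\tilde{T}$, $s_m=(m+\tilde\theta)\tilde T$. A natural candidate is $V(\mathbf{e},\mathbf{e}_{\mathbf{x}})=\bar V(\mathbf{e})+\mathbf{e}_{\mathbf{x}}^{\top}(P\otimes I_N)\mathbf{e}_{\mathbf{x}}$ (or the analogous quadratic built from $P$), using the function $\bar V$ and constants $\gamma_1,\dots,\gamma_5$ from Lemma \ref{lm2}, together with the matrices $P,Q$ from Lemma \ref{lm1}. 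First I would differentiate $V$ along \eqref{s6} on a communication interval: the $\bar V$-term contributes $-\gamma_3|\mathbf{e}|^2$ plus a cross term from replacing $\frac{\partial f}{\partial\mathbf x}(\mathbf{e}_{\mathbf x}+\mathbf 1_N\otimes\mathbf x)$ by $\frac{\partial f}{\partial\mathbf x}(\mathbf x)$ and invoking Assumption \ref{ass3} (Lipschitz constant $\alpha$); the $\mathbf{e}_{\mathbf x}$-term contributes $-\mathbf{e}_{\mathbf x}^{\top}(\mathcal H^{\top}P+P\mathcal H)\mathbf{e}_{\mathbf x}=-\mathbf{e}_{\mathbf x}^{\top}Q\mathbf{e}_{\mathbf x}\le-\lambda_{\min}(Q)|\mathbf{e}_{\mathbf x}|^2$ via Lemma \ref{lm1}, plus cross terms of order $\epsilon$. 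Collecting everything, $\dot V\le-\Xi^{\top}\Gamma_1\Xi$ (up to the $|\cdot|$-vs-$\|\cdot\|$ bookkeeping built into the definitions of $\pi_1,\pi_2$), and the choice $\epsilon<\epsilon_1^*$ is exactly what makes $\Gamma_1>0$ by the Schur complement, so on $[t_m,s_m)$ one gets $\dot V\le-\mu_1 V$ for some $\mu_1>0$ (related to $\lambda_{\min}(\Gamma_1)/\gamma_2$ and $1/\|P\|$).

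Next I would do the silent-interval estimate on $[s_m,t_{m+1})$: here $\mathcal H\mathbf{e}_{\mathbf x}$ is absent, so there is no dissipation in the $\mathbf{e}_{\mathbf x}$-direction and $\dot V$ can only be bounded above by $+\mu_2 V$ for some $\mu_2>0$ (coming from the remaining $\epsilon$-order cross terms and the $-\gamma_3|\mathbf{e}|^2$ term, which no longer dominates); this is where $\Gamma_2$ and the condition $\epsilon<\epsilon_2^*$ enter, ensuring the growth rate is finite and controlled. Then on one period $[t_m,t_{m+1})$ of length $\tilde T$, concatenating $V(t_{m+1})\le e^{-\mu_1\tilde\theta\tilde T}e^{\mu_2(1-\tilde\theta)\tilde T}V(t_m)=e^{-(\mu_1\tilde\theta-\mu_2(1-\tilde\theta))\tilde T}V(t_m)$, so the hypothesis $\mu_1\tilde\theta-\mu_2(1-\tilde\theta)>0$ gives a strict contraction per period; iterating and using $\gamma_1|\mathbf e|^2\le\bar V\le V$ yields exponential decay of $V$, hence of $|\mathbf e|$ and $|\mathbf{e}_{\mathbf x}|$, which is exactly exponential stability of $(\mathbf x^*,\mathbf 1_N\otimes\mathbf x^*)$. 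One must also check the trajectory stays in the domain $\mathbb D=\{|\mathbf e|\le\gamma_5\}$ where Lemma \ref{lm2} applies; this follows by a standard invariance argument since $V$ is nonincreasing along periods once $\epsilon$ is small and the initial condition is in a suitable sublevel set.

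I expect the main obstacle to be the silent-interval analysis and the precise extraction of $\mu_1,\mu_2$. During silent time the $\mathbf{e}_{\mathbf x}$ dynamics lose their stabilizing $-\mathcal H\mathbf{e}_{\mathbf x}$ term entirely, so $V$ genuinely can grow, and the whole argument hinges on the growth rate $\mu_2$ being small relative to $\mu_1$ — which is only true because both the coupling terms and the $\mathbf{e}_{\mathbf x}$-forcing are $O(\epsilon)$ or $O(\sqrt\epsilon)$ (note $\Gamma_2$ has a $\sqrt\epsilon\gamma_3$ entry, not $\epsilon\gamma_3$), so the two thresholds $\epsilon_1^*$ and $\epsilon_2^*$ have to be balanced carefully to keep $\mu_1\tilde\theta>\mu_2(1-\tilde\theta)$ achievable for $\tilde\theta$ close enough to $1$. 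The bookkeeping between $|\mathbf{e}_{\mathbf x}|$ (Euclidean norm of the stacked vector) and $\|\mathbf{e}_{\mathbf x}\|$ as it appears in $\Xi$ and $\Gamma_1,\Gamma_2$, and the factors of $N$ and $\sqrt N$ hidden in $\pi_1,\pi_2$ coming from the $\mathbf 1_N\otimes(\cdot)$ terms, will need care but are routine. Everything else — differentiating $V$, applying the Schur complement to read off positivity of $\Gamma_1$, and the periodic concatenation — is standard Lyapunov-based intermittent-control reasoning.
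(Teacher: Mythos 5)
Your proposal matches the paper's own proof essentially step for step: the same Lyapunov function $V=\bar V(\mathbf e)+\mathbf e_{\mathbf x}^{\top}P\mathbf e_{\mathbf x}$, the same bounds $\dot V\le-\mu_1V$ on communication intervals via positivity of $\Gamma_1$ (for $\epsilon<\epsilon_1^*$) and $\dot V\le\mu_2V$ on silent intervals via $\Gamma_2$ (for $\epsilon<\epsilon_2^*$), and the same periodic concatenation yielding decay at rate $\mu_1\tilde\theta-\mu_2(1-\tilde\theta)>0$. The only addition is your explicit remark about keeping the trajectory in the domain $\mathbb D$ of Lemma \ref{lm2}, a point the paper leaves implicit.
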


\begin{proof}
Take a Lyapunov function candidate as
\begin{equation}\label{t18}
V(\mathbf{e},\mathbf{e}_{\mathbf{x}})=\bar{V}(\mathbf{e})+\mathbf{e}_{\mathbf{x}}^{\top}P\mathbf{e}_{\mathbf{x}},
\end{equation}
where $\bar{V}(\mathbf{e})$ satisfies the obtained properties of Lemma \ref{lm2}, and $P$ is given in Lemma \ref{lm1}. For convenience, let $V=V(\mathbf{e},\mathbf{e}_{\mathbf{x}})$.
Obviously, there exist positive constants $\eta_1$ and $\eta_2$ such that
\begin{equation*}
\eta_1|\Xi|^2\leq V\leq\eta_2|\Xi|^2.
\end{equation*}

For $t\in[m\tilde{T},(m+\tilde{\theta})\tilde{T})$, taking the time derivative of $V$ along the trajectory of \eqref{s6} gives
\begin{equation}\label{t11}
\begin{split}
\dot{V}=&\left(\frac{\partial\bar{V}(\mathbf{e})}{\partial\mathbf{e}}\right)^{\top}\dot{\mathbf{e}}+2\mathbf{e}_{\mathbf{x}}^{\top}P\dot{\mathbf{e}}_{\mathbf{x}}\\
=&~\epsilon\left(\frac{\partial\bar{V}(\mathbf{e})}{\partial\mathbf{e}}\right)^{\top}\mathrm{diag}\{\bar{k}_i\}\frac{\partial f}{\partial \mathbf{x}}(\mathbf{e}_{\mathbf{x}}+\mathbf{1}_N\otimes \mathbf{x})\\
&-2\mathbf{e}_{\mathbf{x}}^{\top}P\left(\mathbf{1}_N\otimes\left(\epsilon\mathrm{diag}\{\bar{k}_i\}\frac{\partial f}{\partial \mathbf{x}}(\mathbf{e}_{\mathbf{x}}+\mathbf{1}_N\otimes \mathbf{x})\right)\right)
-2\mathbf{e}_{\mathbf{x}}^{\top}P\mathcal{H}\mathbf{e}_{\mathbf{x}}.
\end{split}
\end{equation}
According to Lemma \ref{lm2} and assumption \ref{ass3}, it follows that
\begin{equation}\label{t12}
\begin{split}
&~\epsilon\left(\frac{\partial\bar{V}(\mathbf{e})}{\partial\mathbf{e}}\right)^{\top}\mathrm{diag}\{\bar{k}_i\}\left(\frac{\partial f}{\partial \mathbf{x}}(\mathbf{e}_{\mathbf{x}}+\mathbf{1}_N\otimes \mathbf{x})-\frac{\partial f}{\partial \mathbf{x}}(\mathbf{x})\right)\\
\leq&~\epsilon\alpha\gamma_4\max\{\bar{k}_i\}|\mathbf{e}|\Vert\mathbf{e}_{\mathbf{x}}\Vert,
\end{split}
\end{equation}
and
\begin{equation}\label{t13}
\epsilon\left(\frac{\partial\bar{V}(\mathbf{e})}{\partial\mathbf{e}}\right)^{\top}\mathrm{diag}\{\bar{k}_i\}\frac{\partial f}{\partial \mathbf{x}}(\mathbf{x})\leq-\epsilon\gamma_3|\mathbf{e}|^2.
\end{equation}
Recalling $\frac{\partial f}{\partial\mathbf{x}}(\mathbf{x}^*)=\mathbf{0}_N$ and $\mathbf{e}=\mathbf{x}-\mathbf{x}^*$, we observe that
\begin{equation}\label{t14}
\begin{split}
&-2\mathbf{e}_{\mathbf{x}}^{\top}P\left(\mathbf{1}_N\otimes\left(\epsilon\mathrm{diag}\{\bar{k}_i\}\frac{\partial f}{\partial \mathbf{x}}(\mathbf{e}_{\mathbf{x}}+\mathbf{1}_N\otimes \mathbf{x})\right)\right)\\
\leq&~2\epsilon\alpha\sqrt{N}\Vert P\Vert\max\{\bar{k}_i\}\Vert\mathbf{e}_{\mathbf{x}}\Vert\Vert\mathbf{e}_{\mathbf{x}}+\mathbf{1}_N\otimes \mathbf{e}\Vert\\
\leq&~2\epsilon\alpha\sqrt{N}\Vert P\Vert\max\{\bar{k}_i\}\Vert\mathbf{e}_{\mathbf{x}}\Vert^2
+2\epsilon\alpha N\Vert P\Vert\max\{\bar{k}_i\}|\mathbf{e}|\Vert\mathbf{e}_{\mathbf{x}}\Vert.
\end{split}
\end{equation}
From Lemma \ref{lm1}, one has
\begin{equation}\label{t15}
-2\mathbf{e}_{\mathbf{x}}^{\top}P\mathcal{H}\mathbf{e}_{\mathbf{x}}=-\mathbf{e}_{\mathbf{x}}^{\top}Q\mathbf{e}_{\mathbf{x}}\leq-\lambda_{\min}(Q)\Vert\mathbf{e}_{\mathbf{x}}\Vert^2.
\end{equation}

By substituting \eqref{t12}-\eqref{t15} into \eqref{t11}, it yields
\begin{equation*}
\begin{split}
\dot{V}\leq&-\epsilon\gamma_3|\mathbf{e}|^2+2\epsilon\pi_1|\mathbf{e}|\Vert\mathbf{e}_{\mathbf{x}}\Vert+(\epsilon\pi_2-\lambda_{\min}(Q))\Vert\mathbf{e}_{\mathbf{x}}\Vert^2\\
=&-\Xi^{\top}\Gamma_1\Xi,
\end{split}
\end{equation*}
where $\Gamma_1$ is a positive definite matrix for $\epsilon\in(0,\epsilon_1^*)$. Consequently, we conclude that
\begin{equation}\label{t19}
\begin{split}
\dot{V}\leq&-\lambda_{\min}(\Gamma_1)|\Xi|^2\\
\leq&-\frac{\lambda_{\min}(\Gamma_1)}{\eta_2}V\\
\triangleq&-\mu_1V,
\end{split}
\end{equation}
which further means that for $t\in[m\tilde{T},(m+\tilde{\theta})\tilde{T})$,
\begin{equation}\label{t16}
V(t)\leq V(m\tilde{T})\exp\{-\mu_1(t-m\tilde{T})\}.
\end{equation}
Similarly, for $t\in[(m+\tilde{\theta})\tilde{T},(m+1)\tilde{T})$, we can obtain
\begin{equation*}
\begin{split}
\dot{V}
=&~\epsilon\left(\frac{\partial\bar{V}(\mathbf{e})}{\partial\mathbf{e}}\right)^{\top}\mathrm{diag}\{\bar{k}_i\}\frac{\partial f}{\partial \mathbf{x}}(\mathbf{e}_{\mathbf{x}}+\mathbf{1}_N\otimes \mathbf{x})\\
&-2\mathbf{e}_{\mathbf{x}}^{\top}P\left(\mathbf{1}_N\otimes\left(\epsilon\mathrm{diag}\{\bar{k}_i\}\frac{\partial f}{\partial \mathbf{x}}(\mathbf{e}_{\mathbf{x}}+\mathbf{1}_N\otimes \mathbf{x})\right)\right)\\
\leq&-\epsilon\gamma_3|\mathbf{e}|^2+2\epsilon\pi_1|\mathbf{e}|\Vert\mathbf{e}_{\mathbf{x}}\Vert+\epsilon\pi_2\Vert\mathbf{e}_{\mathbf{x}}\Vert^2\\
\leq&\sqrt{\epsilon}\gamma_3|\mathbf{e}|^2+2\epsilon\pi_1|\mathbf{e}|\Vert\mathbf{e}_{\mathbf{x}}\Vert+\epsilon\pi_2\Vert\mathbf{e}_{\mathbf{x}}\Vert^2\\
=&~\Xi^{\top}\Gamma_2\Xi,
\end{split}
\end{equation*}
where $\Gamma_2$ is a positive definite matrix for $\epsilon\in(0,\epsilon_2^*)$. We also conclude that
\begin{equation}\label{t110}
\begin{split}
\dot{V}\leq&~\lambda_{\max}(\Gamma_2)|\Xi|^2\\
\leq&~\frac{\lambda_{\min}(\Gamma_1)}{\eta_1}V\\
\triangleq&~\mu_2V,
\end{split}
\end{equation}
which further means that for $t\in[(m+\tilde{\theta})\tilde{T},(m+1)\tilde{T})$,
\begin{equation}\label{t17}
V(t)\leq V((m+\tilde{\theta})\tilde{T})\exp\{\mu_2(t-(m+\tilde{\theta})\tilde{T})\}.
\end{equation}

By the combination of \eqref{t16} and \eqref{t17}, using recursion yields, for $t\in[m\tilde{T},(m+\tilde{\theta})\tilde{T})$,
\begin{equation*}
\begin{split}
V(t)\leq&~V(m\tilde{T})\exp\{-\mu_1(t-m\tilde{T})\}\\
\leq&~V(0)\exp\{-m\mu_1\tilde{\theta} \tilde{T}+m\mu_2(1-\tilde{\theta})\tilde{T}\}\\
\leq&~V(0)\exp\{-(\mu_1\tilde{\theta}-\mu_2(1-\tilde{\theta}))(t-\tilde{\theta} \tilde{T})\}.
\end{split}
\end{equation*}
For $t\in[(m+\tilde{\theta})\tilde{T},(m+1)\tilde{T})$,
\begin{equation*}
\begin{split}
V(t)\leq&~V((m+\tilde{\theta})\tilde{T})\exp\{\mu_2(t-(m+\tilde{\theta})\tilde{T})\}\\
\leq&~V(0)\exp\{-(m+1)\mu_1\tilde{\theta} \tilde{T}+(m+1)\mu_2(1-\tilde{\theta})\tilde{T}\}\\
\leq&~V(0)\exp\{-(\mu_1\tilde{\theta}-\mu_2(1-\tilde{\theta}))(t-\tilde{\theta} \tilde{T})\}.
\end{split}
\end{equation*}
Hence, it holds that for any $t>0$, $\epsilon\in(0,\epsilon^*)$,
\begin{equation*}
\begin{split}
V(t)\leq V(0)\exp\{-(\mu_1\tilde{\theta}-\mu_2(1-\tilde{\theta}))(t-\tilde{\theta} \tilde{T})\},
\end{split}
\end{equation*}
which together with $\mu_1\tilde{\theta}-\mu_2(1-\tilde{\theta})>0$ implies that $(\mathbf{x}^*,\mathbf{1}_N\otimes\mathbf{x}^*)$ is exponentially stable.
\end{proof}

\subsection{Aperiodically Intermittent Communication with Quasi-periodicity Assumption}
In this section, we propose the quasi-periodicity assumption to limit the aperiodicity of the communication time width of \eqref{icl}. The quasi-periodicity assumption implies that each communication time width must be greater than or equal to $\bar{\theta}$, meanwhile, the sum of communication time width and silent time width must be less than or equal to $\bar{T}$. Thus, the communication time width must be no larger than $\bar{T}-\bar{\theta}$. This assumption has been widely applied in previous literature (e.g., see \cite{chen2022finite,wen2014distributed,liu2015synchronization,liu2014synchronization,dong2022intermittent}). In what follows, we propose an AIC strategy to realize the Nash equilibrium seeking.

\begin{theorem}\label{tm2}
Assume that Assumptions \ref{ass1}-\ref{ass4} hold and the players' actions are governed by \eqref{s1} and \eqref{icl}. There exists a scalar $\epsilon\in(0,\epsilon^*)$, such that \eqref{s2} has the exponentially stable equilibrium point $(\mathbf{x}^*,\mathbf{1}_N\otimes\mathbf{x}^*)$ under AIC with quasi-periodicity assumption
\begin{equation*}
\inf_m\{s_m-t_m\}=\bar{\theta},~~\sup_m\{t_{m+1}-t_m\}=\bar{T},
\end{equation*}
and $\rho=\mu_1\bar{\theta}-\mu_2(\bar{T}-\bar{\theta})>0$, where $0<\bar{\theta}<\bar{T}<\infty$.
\end{theorem}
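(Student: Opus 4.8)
The plan is to recycle, almost verbatim, the machinery of the proof of Theorem~\ref{tm1}. Take the same Lyapunov function $V(\mathbf{e},\mathbf{e}_{\mathbf{x}})=\bar{V}(\mathbf{e})+\mathbf{e}_{\mathbf{x}}^{\top}P\mathbf{e}_{\mathbf{x}}$ with its two-sided bound $\eta_1|\Xi|^2\leq V\leq\eta_2|\Xi|^2$, and note that the two differential inequalities obtained there do not use periodicity anywhere: the computation leading to \eqref{t19} shows that on \emph{any} communication interval $[t_m,s_m)$ one has $\dot{V}\leq-\mu_1 V$, hence $V(t)\leq V(t_m)\exp\{-\mu_1(t-t_m)\}$, while the computation leading to \eqref{t17} shows that on \emph{any} silent interval $[s_m,t_{m+1})$ one has $\dot{V}\leq\mu_2 V$, hence $V(t)\leq V(s_m)\exp\{\mu_2(t-s_m)\}$. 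Both require only $\epsilon\in(0,\epsilon^*)$, which makes $\Gamma_1,\Gamma_2$ positive definite exactly as before. So the first step is just to restate these two estimates in the aperiodic indexing.

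The second step is a cycle-wise recursion. Chaining the two bounds across $[t_m,t_{m+1})$ gives $V(t_{m+1})\leq V(t_m)\exp\{-\mu_1(s_m-t_m)+\mu_2(t_{m+1}-s_m)\}$. The quasi-periodicity assumption furnishes $s_m-t_m\geq\bar{\theta}$, and, since $t_{m+1}-t_m\leq\bar{T}$, also $t_{m+1}-s_m=(t_{m+1}-t_m)-(s_m-t_m)\leq\bar{T}-\bar{\theta}$; because $\mu_1,\mu_2>0$ this forces the exponent to be at most $-(\mu_1\bar{\theta}-\mu_2(\bar{T}-\bar{\theta}))=-\rho<0$. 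Iterating yields $V(t_m)\leq V(0)e^{-m\rho}$. For $t$ in the $m$-th communication interval the extra factor $\exp\{-\mu_1(t-t_m)\}$ only sharpens the bound; for $t$ in the $m$-th silent interval the same two inequalities give $-\mu_1(s_m-t_m)+\mu_2(t-s_m)\leq-\rho$ since $t-s_m<t_{m+1}-s_m\leq\bar{T}-\bar{\theta}$, so $V(t)\leq V(t_m)e^{-\rho}\leq V(0)e^{-m\rho}$ there as well. Hence $V(t)\leq V(0)e^{-m\rho}$ for every $t\in[t_m,t_{m+1})$.

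The third step converts the discrete index into continuous time. Since $t_{k+1}-t_k\leq\bar{T}$ for all $k$, summation gives $t_m\leq m\bar{T}$, so for $t\in[t_m,t_{m+1})$ we have $t<t_{m+1}\leq(m+1)\bar{T}$, i.e. $m\geq t/\bar{T}-1$, and therefore $V(t)\leq V(0)e^{\rho}\exp\{-(\rho/\bar{T})t\}$. Combining with $\eta_1|\Xi|^2\leq V\leq\eta_2|\Xi|^2$ produces $|\Xi(t)|\leq\sqrt{\eta_2 e^{\rho}/\eta_1}\,|\Xi(0)|\exp\{-(\rho/(2\bar{T}))t\}$, which is precisely exponential stability of $(\mathbf{x}^*,\mathbf{1}_N\otimes\mathbf{x}^*)$ for every $\epsilon\in(0,\epsilon^*)$.

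The only genuine difficulty lies in the silent-interval bookkeeping: unlike the PIC case, the cycle lengths and silent-time widths now vary with $m$, and $V$ truly increases during silent time, so one must be careful to absorb that growth into the \emph{same} cycle-wise contraction factor $e^{-\rho}$ (using $s_m-t_m\geq\bar{\theta}$ together with $t_{m+1}-s_m\leq\bar{T}-\bar{\theta}$, rather than estimating the two widths independently) and to verify that the number of elapsed cycles still grows linearly in $t$, which is exactly what $\sup_m\{t_{m+1}-t_m\}=\bar{T}$ guarantees. Everything else is a transcription of the estimates already established for Theorem~\ref{tm1}.
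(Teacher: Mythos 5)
Your proposal is correct and follows essentially the same route as the paper's proof: the same Lyapunov function and interval-wise bounds $\dot V\leq-\mu_1 V$ (communication) and $\dot V\leq\mu_2 V$ (silent), the cycle-wise contraction $V(t_{m+1})\leq V(t_m)\exp\{-\mu_1\bar\theta+\mu_2(\bar T-\bar\theta)\}=V(t_m)e^{-\rho}$, and the conversion $m\geq t/\bar T-1$ yielding $V(t)\leq V(0)e^{\rho}\exp\{-(\rho/\bar T)t\}$. Your mid-interval bookkeeping just makes explicit a step the paper passes over quickly ("associating with \eqref{t22} and \eqref{t23}"), so no substantive difference remains.
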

\begin{proof}
Use the same Lyapunov function \eqref{t18} in Theorem \ref{tm1}. Similarly, for any $t>0$, $\epsilon\in(0,\epsilon^*)$, it follows from \eqref{t19} and \eqref{t110} that
\begin{equation}\label{t21}
\dot{V}(t)\leq\left\{
\begin{aligned}
&V(t_m)\exp\{-\mu_1(t-t_m)\},~~~t\in[t_m,s_m),\\
&V(s_m)\exp\{\mu_2(t-s_m)\},~~t\in[s_m,t_{m+1}),
\end{aligned}
\right.
\end{equation}
which further means that
\begin{equation}\label{t22}
\begin{split}
V(s_m)\leq&~V(t_m)\exp\{-\mu_1(s_m-t_m)\}\\
\leq&~V(t_m)\exp\{-\mu_1\bar{\theta}\},
\end{split}
\end{equation}
and
\begin{equation}\label{t23}
\begin{split}
V(t_{m+1})\leq&~V(t_m)\exp\{-\mu_1(s_m-t_m)+\mu_2(t_{m+1}-s_m)\}\\
\leq&~V(t_m)\exp\{-\mu_1\bar{\theta}+\mu_2(\bar{T}-\bar{\theta})\}.
\end{split}
\end{equation}
By recursion, one obtains
\begin{equation*}
V(t_{m+1})\leq V(0)\exp\{-m\rho\}\leq V(0)\exp\{\rho\}\exp\{-\frac{\rho}{\bar{T}}t\}.
\end{equation*}
It can be verified that when $\rho>0$, $\lim_{m\rightarrow\infty}V(t_{m+1})=0$. Then, associating with \eqref{t22} and \eqref{t23}, it is easy to obtain that $V(t)\leq V(0)\exp\{\rho\}\exp\{-\frac{\rho}{\bar{T}}t\}$ and $\lim_{t\rightarrow\infty}V(t)=0$. Accordingly, $(\mathbf{x}^*,\mathbf{1}_N\otimes\mathbf{x}^*)$ is exponentially stable.
\end{proof}

\begin{corollary}\label{cy1}
Assume that Assumptions \ref{ass1}-\ref{ass4} hold and the players' actions are governed by \eqref{s1} and \eqref{icl}. There exists a scalar $\epsilon\in(0,\epsilon^*)$, such that \eqref{s2} has the exponentially stable equilibrium point $(\mathbf{x}^*,\mathbf{1}_N\otimes\mathbf{x}^*)$ under AIC with minimum communication ratio assumption
\begin{equation*}
\limsup_{m\rightarrow\infty}\{\frac{t_{m+1}-s_m}{t_{m+1}-t_m}\}=\bar{\zeta}\in(0,1).
\end{equation*}
\end{corollary}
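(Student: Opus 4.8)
The plan is to reuse the two differential inequalities \eqref{t19} and \eqref{t110} established inside the proof of Theorem \ref{tm1}, namely $\dot V\leq-\mu_1 V$ on each communication interval $[t_m,s_m)$ and $\dot V\leq\mu_2 V$ on each silent interval $[s_m,t_{m+1})$, and to convert them into a contraction estimate over one full cycle $[t_m,t_{m+1})$. Integrating these, exactly as in \eqref{t22}--\eqref{t23}, gives
\begin{equation*}
V(t_{m+1})\leq V(t_m)\exp\bigl\{-\mu_1(s_m-t_m)+\mu_2(t_{m+1}-s_m)\bigr\}.
\end{equation*}
Writing $\tau_m:=t_{m+1}-t_m$ for the cycle length and $\zeta_m:=(t_{m+1}-s_m)/(t_{m+1}-t_m)$ for the silent ratio of the $m$-th cycle, the exponent becomes $\tau_m\bigl(-\mu_1(1-\zeta_m)+\mu_2\zeta_m\bigr)=-\tau_m\bigl(\mu_1-(\mu_1+\mu_2)\zeta_m\bigr)$.

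The hypothesis $\limsup_{m\to\infty}\zeta_m=\bar\zeta\in(0,1)$ is then used as follows. Since $\mu_1-(\mu_1+\mu_2)\bar\zeta$ need not itself be positive for an arbitrary $\bar\zeta$, the honest reading is that one additionally needs $\bar\zeta<\mu_1/(\mu_1+\mu_2)$ (this is the analogue of the conditions $\mu_1\tilde\theta-\mu_2(1-\tilde\theta)>0$ in Theorem \ref{tm1} and $\rho>0$ in Theorem \ref{tm2}); I would state this explicitly. Granting it, pick $\kappa$ with $\bar\zeta<\kappa<\mu_1/(\mu_1+\mu_2)$. By definition of $\limsup$ there is an $M$ such that $\zeta_m\leq\kappa$ for all $m\geq M$, so $-\mu_1(1-\zeta_m)+\mu_2\zeta_m\leq-\bigl(\mu_1-(\mu_1+\mu_2)\kappa\bigr)=:-\delta<0$ for $m\geq M$. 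Hence $V(t_{m+1})\leq V(t_m)\exp\{-\delta\tau_m\}$ for all $m\geq M$, and telescoping from $M$ to $m$ gives $V(t_{m+1})\leq V(t_M)\exp\{-\delta(t_{m+1}-t_M)\}$. Because the cycle lengths $\tau_m$ are bounded away from $0$ (a running assumption in all the intermittent models here), $t_m\to\infty$, so $V(t_m)\to0$; filling in the interior of each interval via \eqref{t22}-\eqref{t23}-style bounds (on $[t_m,s_m)$ use $V(t)\leq V(t_m)e^{-\mu_1(t-t_m)}\leq V(t_m)$, and on $[s_m,t_{m+1})$ use $V(t)\leq V(t_m)e^{\mu_2\tau_m}$, which is uniformly controlled) yields $\lim_{t\to\infty}V(t)=0$, and in fact an exponential rate $\exp\{-\delta t\}$ up to a constant depending on $V(t_M)$ and $\sup_m\tau_m$. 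Combined with the quadratic sandwich $\eta_1|\Xi|^2\leq V\leq\eta_2|\Xi|^2$ from Theorem \ref{tm1}, this gives exponential stability of $(\mathbf{x}^*,\mathbf{1}_N\otimes\mathbf{x}^*)$.

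The only real obstacle is the one flagged above: the bare statement $\bar\zeta\in(0,1)$ does not by itself force the per-cycle exponent negative, so the corollary is genuinely a corollary only under the supplementary smallness condition $\bar\zeta<\mu_1/(\mu_1+\mu_2)$ (equivalently, $\mu_1(1-\bar\zeta)-\mu_2\bar\zeta>0$), which plays the same role the inequality on $\tilde\theta$ plays in Theorem \ref{tm1}. Everything else — integrating the two inequalities, the $\limsup$ argument, telescoping, and interpolating over the interiors of the intervals — is routine and parallels the proof of Theorem \ref{tm2} almost verbatim. I would also remark that the minimum-communication-ratio model is recovered as the special case $\zeta_m\equiv\zeta$ (or $\zeta_m\to\zeta$ monotonically), and that boundedness of $\tau_m$ away from zero is what ties the index recursion in $m$ back to a genuine estimate in continuous time $t$.
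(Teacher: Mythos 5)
Your proposal is correct and takes essentially the same route as the paper: integrate \eqref{t19} and \eqref{t110} over each cycle and replace the $\rho$ of Theorem~\ref{tm2} by the per-cycle quantity $\rho_m=\mu_1(s_m-t_m)-\mu_2(t_{m+1}-s_m)$, then telescope. The caveats you flag are precisely the points the paper's own proof glosses over: its chain $\rho_m\geq(t_{m+1}-t_m)\left(\mu_1(1-\bar\zeta)-\mu_2\bar\zeta\right)>0$ silently assumes both the smallness condition $\bar\zeta<\mu_1/(\mu_1+\mu_2)$ and that every cycle's silent ratio is at most $\bar\zeta$ (which a $\limsup$ alone does not give — your tail argument with $\kappa\in(\bar\zeta,\mu_1/(\mu_1+\mu_2))$ and an index $M$ repairs this), while its final estimate $V(t)\leq V(0)\exp\{\hat\rho\}\exp\{-\frac{\hat\rho}{\bar T}t\}$ with $\hat\rho=\inf_m\{\rho_m\}$ implicitly uses the upper and lower cycle-length bounds you mention; so your write-up is a more careful rendering of the same argument.
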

\begin{proof}
In this case, the conclusion of Theorem \ref{tm2} is still valid, where $\rho>0$ is replaced by
\begin{equation*}
\begin{split}
\rho_m=&~\mu_1(s_m-t_m)-\mu_2(t_{m+1}-s_m)\\
\geq&~(t_{m+1}-t_m)\left(\mu_1(1-\bar{\zeta})-\mu_2\bar{\zeta}\right)\\
>&~0.
\end{split}
\end{equation*}
Then it can obviously be acquired by the similar proof process of Theorem \ref{tm2} with $\hat{\rho}=\inf_m\{\rho_m\}>0$, i.e.,
\begin{equation*}
V(t)\leq V(0)\exp\{\hat{\rho}\}\exp\{-\frac{\hat{\rho}}{\bar{T}}t\}.
\end{equation*}
Then $\lim_{t\rightarrow\infty}V(t)=0$. Accordingly, $(\mathbf{x}^*,\mathbf{1}_N\otimes\mathbf{x}^*)$ is exponentially stable.
\end{proof}

\begin{remark}\label{rk6}
It should be pointed out that both continuous and intermittent Nash equilibrium seeking strategy can realize Nash equilibrium seeking. In particular, the continuous case can be regarded as a special case. That is to say, the proposed PIC and conventional AIC Nash equilibrium seeking strategies remove the requirement on continuous-time information exchanging among neighboring players. Compared with the continuous methods in \cite{ye2020distributed} and \cite{ma2014distributed}, it is clear that intermittent communication is more applicable in some practical problems.
\end{remark}

\subsection{ Aperiodically Intermittent Communication with Average Communication Ratio}
However, in some practical problems, the assumptions of quasi-periodicity and minimum communication ratio for AIC are unreasonable. Therefore, we should estimate approximately the average width of communication time in the whole time, which implies that considering the AIC from the average sense. Different from conventional AIC, in this section, we will consider the AIC with ACR. In this regard, the aperiodicity restriction in Theorem \ref{tm2} and Corollary \ref{cy1} vanishes, which illustrates that AIC is completely aperiodic. In the following, we will give the theorem and theoretical analysis.

\begin{theorem}\label{tm3}
Assume that Assumptions \ref{ass1}-\ref{ass4} hold and the players' actions are governed by \eqref{s1} and \eqref{icl}. There exists a scalar $\epsilon\in(0,\epsilon^*)$, such that \eqref{s2} has the exponentially stable equilibrium point $(\mathbf{x}^*,\mathbf{1}_N\otimes\mathbf{x}^*)$ under AIC with ACR $\vartheta\in(0,1)$ satisfying $\vartheta>\frac{\mu_2}{\mu_1+\mu_2}$.
\end{theorem}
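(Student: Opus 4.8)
The plan is to recycle essentially the whole apparatus of Theorem~\ref{tm1} and only replace the period-by-period bookkeeping of Theorems~\ref{tm1}--\ref{tm2} by a global ``averaged'' accounting. I would take the same Lyapunov function $V(\mathbf{e},\mathbf{e}_{\mathbf{x}})=\bar{V}(\mathbf{e})+\mathbf{e}_{\mathbf{x}}^{\top}P\mathbf{e}_{\mathbf{x}}$, with the same two-sided bound $\eta_1|\Xi|^2\leq V\leq\eta_2|\Xi|^2$, and simply note that the pointwise differential inequalities \eqref{t19} and \eqref{t110} established there hold for any $\epsilon\in(0,\epsilon^*)$ irrespective of how the switching instants are placed: $\dot{V}\leq-\mu_1 V$ on every communication interval $[t_m,s_m)$ and $\dot{V}\leq\mu_2 V$ on every silent interval $[s_m,t_{m+1})$. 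Thus no new Lyapunov computation is needed.

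The new ingredient is the estimate of the accumulated exponent over an arbitrary horizon. Fixing $t>0$, I would chain the two inequalities across the alternating communication and silent subintervals that tile $[0,t)$: a communication piece of length $\ell$ contributes a factor $e^{-\mu_1\ell}$ and a silent piece of length $\ell$ a factor $e^{\mu_2\ell}$, whence
\begin{equation*}
V(t)\leq V(0)\exp\{-\mu_1\mathcal{M}(t,0)+\mu_2\mathcal{M}^{\mathrm{c}}(t,0)\},
\end{equation*}
with $\mathcal{M}(t,0)$ and $\mathcal{M}^{\mathrm{c}}(t,0)$ the total communication and silent widths on $[0,t)$ as in Definition~\ref{dn2}. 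This bound is valid no matter whether $t$ itself lies in a communication or a silent interval, and it is exactly the point where the ACR concept replaces the recursion $\eqref{t21}$--$\eqref{t23}$.

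Then I would invoke Definition~\ref{dn2}: $\mathcal{M}(t,0)\geq\vartheta t$, hence $\mathcal{M}^{\mathrm{c}}(t,0)=t-\mathcal{M}(t,0)\leq(1-\vartheta)t$, and since $\mu_1,\mu_2>0$,
\begin{equation*}
V(t)\leq V(0)\exp\{-(\mu_1\vartheta-\mu_2(1-\vartheta))t\}=V(0)\exp\{-((\mu_1+\mu_2)\vartheta-\mu_2)t\}.
\end{equation*}
The hypothesis $\vartheta>\frac{\mu_2}{\mu_1+\mu_2}$ is precisely what forces the exponent coefficient $(\mu_1+\mu_2)\vartheta-\mu_2$ to be strictly positive, so $V(t)\to0$ exponentially; combining with $\eta_1|\Xi|^2\leq V$ gives $|\Xi|\to0$ exponentially, i.e. $(\mathbf{x}^*,\mathbf{1}_N\otimes\mathbf{x}^*)$ is exponentially stable.

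I expect the only genuinely delicate point to be the chaining estimate in the second paragraph: one must check that the interval-wise exponential factors telescope correctly over a possibly unbounded number of switches and that the resulting exponent is exactly $-\mu_1\mathcal{M}(t,0)+\mu_2\mathcal{M}^{\mathrm{c}}(t,0)$, including the effect of the (partial) interval containing $t$. This is a routine continuity/induction argument, but since it is where the ACR definition does all the work, I would write it out carefully rather than leave it implicit.
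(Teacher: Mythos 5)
Your proposal is correct and follows essentially the same route as the paper: the same Lyapunov function with the interval-wise bounds \eqref{t19}--\eqref{t110}, chained over the alternating communication/silent subintervals to obtain $V(t)\leq V(0)\exp\{-\mu_1\mathcal{M}(t,0)+\mu_2\mathcal{M}^{\mathrm{c}}(t,0)\}$, and then Definition~\ref{dn2} to get the decay rate $\mu_1\vartheta-\mu_2(1-\vartheta)>0$. The only difference is presentational: the paper writes out the first few intervals and invokes recursion with a case split on whether $t$ falls in a communication or silent interval, which is exactly the telescoping bookkeeping you flag as the delicate step.
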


\begin{proof}
Using the analysis process similar to that in Theorems \ref{tm1} and \ref{tm2}, it obtains from \eqref{t21} that for $t\in[0,s_0)$,
\begin{equation*}
V(t)\leq V(0)\exp\{-\mu_{1}t\}.
\end{equation*}
For $t\in[s_0,t_1)$,
\begin{equation*}
\begin{split}
V(t)&\leq V(s_0)\exp\{\mu_2(t-s_0)\}\\
&\leq V(0)\exp\{-\mu_{1}s_0+\mu_2(t-s_0)\}.
\end{split}
\end{equation*}
For $t\in[t_{1},s_{1})$,
\begin{equation*}
\begin{split}
V(t)&\leq V(t_1)\exp\{-\mu_1(t-t_1)\}\\
&\leq V(0)\exp\{-\mu_1(t-t_1+s_0)+\mu_{2}(t_1-s_0)\}.
\end{split}
\end{equation*}
For $t\in[s_{1},t_{2})$,
\begin{equation*}
\begin{split}
V(t)\leq&~V(s_1)\exp\{\mu_2(t-s_1)\}\\
\leq&~V(0)\exp\{-\mu_1(s_1-t_1+s_0)+\mu_2(t-s_1+t_1-s_0)\}.
\end{split}
\end{equation*}

By recursion, for any $t>0$, there exists a $m^*$ such that $t\in[t_{m^*},t_{m^*+1})$. For $t\in [t_{m^*},s_{m^*})$, it follows from Definition \ref{dn2} that
\begin{equation*}
\begin{split}
V(t)\leq&~V(0)\exp\{-\mu_1\mathcal{M}(t,0)+\mu_2\mathcal{M}^{\mathrm{c}}(t,0)\}.
\end{split}
\end{equation*}
where
\begin{equation*}
\begin{split}
\mathcal{M}(t,0)=&\sum_{m=0}^{m^*-1}(s_m-t_m)+t-t_{m^*},\\
\mathcal{M}^{\mathrm{c}}(t,0)=&\sum_{m=0}^{m^*-1}(t_{m+1}-s_{m}).
\end{split}
\end{equation*}
For $t\in [t_{m^*},s_{m^*})$, it also follows that
\begin{equation*}
\begin{split}
V(t)\leq&~V(0)\exp\{-\mu_1\mathcal{M}(t,0)+\mu_2\mathcal{M}^{\mathrm{c}}(t,0)\}.
\end{split}
\end{equation*}
where
\begin{equation*}
\begin{split}
\mathcal{M}(t,0)=&\sum^{m^*}_{m=0}(s_m-t_m),\\
\mathcal{M}^{\mathrm{c}}(t,0)=&\sum_{m=0}^{m^*-1}(t_{m+1}-s_m).
\end{split}
\end{equation*}
Hence, for any $t>0$, $\epsilon\in(0,\epsilon^*)$, it holds from Definition \ref{dn2} that
\begin{equation*}
\begin{split}
V(t)\leq&~V(0)\exp\{-\mu_1\mathcal{M}(t,0)+\mu_2\mathcal{M}^{\mathrm{c}}(t,0)\}\\
\leq&~V(0)\exp\{-(\mu_1\vartheta-\mu_2(1-\vartheta))t\},
\end{split}
\end{equation*}
which together with $\mu_1\vartheta-\mu_2(1-\vartheta)>0$ implies that $(\mathbf{x}^*,\mathbf{1}_N\otimes\mathbf{x}^*)$ is exponentially stable.
\end{proof}

\begin{remark}\label{rk7}
Actually, Theorem \ref{tm3} can be regarded as a discontinuous version of Theorem 1 in \cite{ye2017distributed}. The main difference is that Theorem \ref{tm3} considers the circumstance of intermittent communication, while the results in the above literature only discuss the case of continuous-time information exchanging among neighboring players. From a theoretical point of view, the proposed AIC strategy with ACR can allocate the communication time more rationally. Moreover, with regard to the advantages of using ACR rather than the quasi-periodicity assumption to deal with practical problems, refer to \cite{wang2021stabilization} and \cite{wang2021stability}.
\end{remark}

\section{Numerical experiments}\label{sec4}
In the following, we will simulate the electrical energy consumption game where the players' actions are one-dimensional, and the connectivity control game where the players' actions are two-dimensional, successively.
\begin{figure}[!htb]
\centering
\begin{tikzpicture}
[L1Node/.style={circle,draw=black!100, fill=white!10, very thick, minimum size=8mm}]

\node[L1Node] (a_1) at (2.1,3)  {1};
\node[L1Node] (a_2) at (0.5,1.8)  {2};
\node[L1Node] (a_3) at (1.15,0.2)  {3};
\node[L1Node] (a_4) at (2.95,0.2)  {4};
\node[L1Node] (a_5) at (3.7,1.8)  {5};

\draw[->] (a_1)--(a_5);
\draw[->] (a_5)--(a_4);
\draw[->] (a_4)--(a_3);
\draw[->] (a_3)--(a_2);
\draw[->] (a_2)--(a_1);				
\end{tikzpicture}
\caption{Strongly connected and directed communication graph.}\label{cg}
\end{figure}
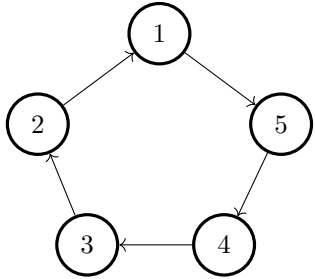

\subsection{Electrical Energy Consumption Game}
In this section, an electrical energy consumption game of five industrial players that are equipped with HVAC systems is given, whose configuration has been demonstrated in Fig. \ref{hvac}. More details of the system have been depicted fully in \cite{abrazeh2022virtual}. In the considered game, players communicate with each other via a strongly connected and directed communication graph $\mathcal{G}$ as illustrated in Fig. \ref{cg}. The payoff function of player $i$ is
\begin{equation*}
f_i(\mathbf{x})=-(x_i-x_i^q)^2-x_iW(\mathbf{x}),
\end{equation*}
where for $i\in\{1,2,3,4,5\}$, $W(\mathbf{x})=r_1\sum_{j=1}^Nx_j+r_2$, and $x_i^q$, $r_1$, $r_2$ are constants. The parameters are listed in Table \ref{ta1} \cite{ye2016game}. By direct calculation, the Nash equilibrium is calculated as $\mathbf{x}^*=(3.9379,8.6996,13.4609,18.2236,22.9854)^{\top}$, whose uniqueness can be guaranteed by Assumption \ref{ass4}. For this example, communication ratio defined in Theorem \ref{tm1} is set as $\tilde{\theta}=0.5$, quasi-periodicity assumption defined in Theorem \ref{tm2} is set as $\bar{\theta}=0.5$, $\bar{T}=10$, and ACR defined in Definition \ref{dn2} is set as $\vartheta=0.5$. Hence, the conditions in Theorems \ref{tm1}-\ref{tm3} are satisfied, respectively, by which it can be inferred that the Nash equilibrium is exponentially stable via the presented strategies for $\epsilon\in(0,\epsilon^*)$. In the numerical experiments, the initial values of all the variables in \eqref{s1} and \eqref{icl} are set as $\mathbf{x}(0)=(21,5,1,13,16)^{\top}$ and $\mathbf{y}_i(0)=(0,0,0,0,0)^{\top}$.

\begin{table}[!htb]
\caption{\\ PARAMETERS IN THE NUMERICAL EXPERIMENTS}\label{ta1}
\centering
  \renewcommand\arraystretch{1.5}
  \begin{tabular}{|c|c|c|c|c|c|}
  \hline
  ~~ & player1 & player2 & player3 & player4 & player5 \\ \hline
  $x_i^q(\mathrm{kWh})$ & 10 & 15 & 20 & 25 & 30 \\ \hline
  $r_1$ &  \multicolumn{5}{c|}{0.1} \\ \hline
  $r_2(\mathrm{MU/kWh})$ & \multicolumn{5}{c|}{5} \\ \hline
\end{tabular}
\end{table}

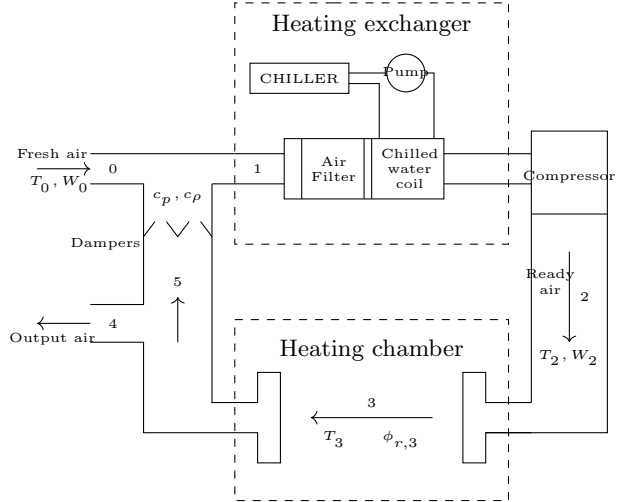
\begin{figure}[t]
\centering
\begin{tikzpicture}
\draw[dashed](0.5,0) -- (1.8,0) -- (1.8,-2.4) -- (-1.8,-2.4)--(-1.8,0)--(0.5,0);
\draw(2.1,-1.1)--(1.5,-1.1)--(1.5,-0.7)--(1.2,-0.7)--(1.2,-1.9)--(1.5,-1.9)--(1.5,-1.5)--(2.1,-1.5);
\draw(2.1,-1.1)--(2.1,2.5)--(3.1,2.5)--(3.1,-1.5)--(1.5,-1.5);
\draw(-2.1,1.8)--(-2.1,-1.1)--(-1.5,-1.1)--(-1.5,-0.7)--(-1.2,-0.7)--(-1.2,-1.9)--(-1.5,-1.9)--(-1.5,-1.5)--(-3,-1.5)--(-3,-0.3)--(-3.7,-0.3);
\draw[dashed](-0.5,4.2)--(-1.8,4.2)--(-1.8,1.0)--(1.8,1.0)--(1.8,4.2)--(-0.5,4.2);

\draw(0.95,1.6)rectangle(-1.15,2.4);
\draw(-0.92,1.6)--(-0.92,2.4) ;
\draw(-0.1,1.6)--(-0.1,2.4) ;
\draw(0.0,1.6)--(0.0,2.4) ;
\draw(0.95,1.8)--(2.1,1.8);
\draw(0.95,2.2)--(2.1,2.2);
\draw(-1.15,1.8)--(-2.1,1.8);
\draw(-1.15,2.2)--(-3.7,2.2);
\draw(-3.7,0.2)--(-3,0.2)--(-3,1.8)--(-3.7,1.8);
\draw(-1.6,3)rectangle(-0.3,3.4);
\draw(-0.3,3.13)--(0.1,3.13)--(0.1,2.4);
\draw(3.1,1.4)--(2.1,1.4);
\draw(-0.3,3.27)--(0.1,3.27)--(0.18,3.27);
\draw (0.45,3.27) circle (0.25);
\draw(0.7,3.27)--(0.82,3.27)--(0.82,2.4);
\draw [->]    (0.8,-1.3) -- (-0.8,-1.3);
\draw [->]    (-4.4,2) --(-3.7,2);

\draw [->]    (-3.7,-0.05) --(-4.4,-0.05);
\draw [->]    (-2.55,-0.3) --(-2.55,0.3);
\draw [->]    (2.6,0.9) --(2.6,-0.3);
\draw(-2.4,1.3)--(-2.55,1.1)--(-2.7,1.3);
\draw(-2.25,1.3)--(-2.1,1.1);
\draw(-2.85,1.3)--(-3,1.1);
\tiny
\node (a) at (-4.2,2.2){Fresh air};
\node (a) at (-4.2,-0.25){Output air};
\node (a) at (-3.4,-0.05){$4$};
\node (a) at (-3.4,2){$0$};
\node (a) at (-1.5,2){$1$};
\node (a) at (-4.1,1.8){$T_{_{0}},W_{_{0}}$};
\node (a) at (-0.5,2.1)  {Air};
\node (a) at (-0.5,1.9)  {Filter};
\node (a) at (0.5,2.2)  {Chilled};
\node (a) at (0.5,2)  {water};
\node (a) at (0.5,1.8)  {coil};
\node (a) at (0.45,3.27)  {Pump};
\node (a) at (-0.95,3.2)  {CHILLER};
\node (a) at (2.6,1.9){Compressor};
\node (a) at (2.35,0.6){Ready};
\node (a) at (2.35,0.4){air};
\node (a) at (2.8,0.3){$2$};
\node (a) at (-2.55,0.5){$5$};
\node (a) at (-2.55,1.6){$c_{_{p}},c_{\rho}$};
\node (a) at (0,-1.1){$3$};
\node (a) at (0,-1.6){$T_{_{3}}\qquad \phi_{_{r,3}}$};
\node (a) at (2.6,-0.5){$T_{_{2}},W_{_{2}}$};
\node (a) at (-3.5,1){Dampers};
\small
\node (a) at (0,-0.4){Heating chamber};
\node (a) at (0,3.9){Heating exchanger};
\end{tikzpicture}
\caption{Structure of a typical HVAC system \cite{abrazeh2022virtual}.}\label{hvac}
\end{figure}

Firstly, the players' actions and estimates governed by the proposed PIC strategy are plotted in Figs. \ref{picx} and \ref{picy}. Take the communication time intervals as $[0,5)\cup[10,15)\cup[20,25)\cup[30,35)\cup[40,45)\cup[50,55)\cup[60,65)\cup[70,75)\cup[80,85)\cup[90,95)$. Then, $\inf_m\{s_m-t_m\}=\bar{\theta}=0.5$, $\sup_m\{t_{m+1}-t_m\}=\bar{T}=10$. From the results, it is not difficult to find that the players¡¯ actions still converge to the Nash equilibrium at length, no matter how far the initial values are from the Nash equilibrium, which can verify Theorem \ref{tm1}.

Secondly, take the communication time intervals as $[0,6)\cup[10,14)\cup[20,24.5)\cup[29,34.5)\cup[41,46)\cup[50,55.5)\cup[60,64.5)\cup[72,76)\cup[80,85.5)\cup[90,95.5)$. Then, $\inf_m\{s_m-t_m\}=\bar{\theta}=0.5$, $\sup_m\{t_{m+1}-t_m\}=\bar{T}=10$. From Fig. \ref{aicx}, it can be seen that the initialization of the variables is the same as Figs. \ref{picx}. In the same way,
the proposed conventional AIC strategy also enables the players' actions to converge to the Nash equilibrium at length. Fig. \ref{aicy} shows that $\mathbf{y}_i$ also converge to the Nash equilibrium at length. Therefore, the conclusion in Theorem \ref{tm2} is numerically testified.

\begin{figure}[!htb]
\centering
\includegraphics[width=2.8in, height=2.1in]{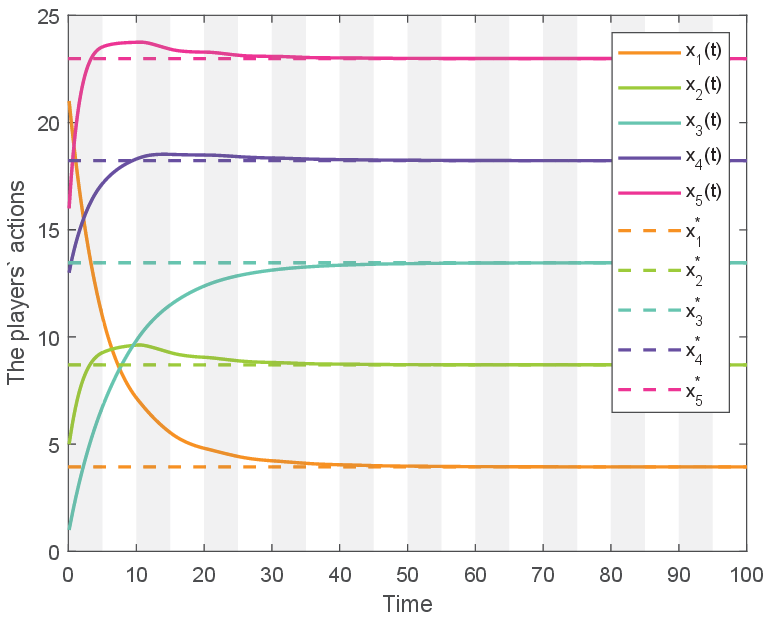}
\caption{The actions $\mathbf{x}_i$ of players generated by PIC strategy.}\label{picx}
\end{figure}

\begin{figure}[!htb]
\centering
\includegraphics[width=2.8in, height=2.1in]{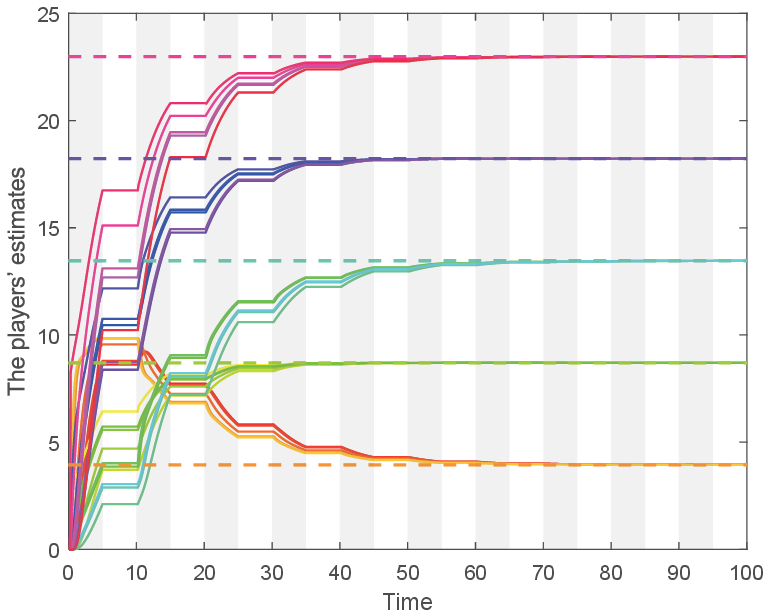}
\caption{The estimates $y_{ij}$ of all players' actions generated by PIC strategy.}\label{picy}
\end{figure}

\begin{figure}[!htb]
\centering
\includegraphics[width=2.8in, height=2.1in]{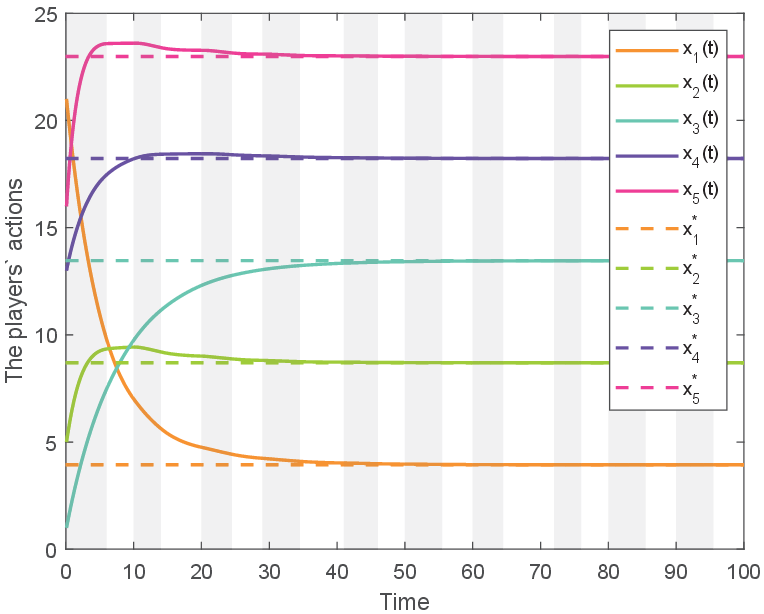}
\caption{The actions $\mathbf{x}_i$ of players generated by AIC strategy.}\label{aicx}
\end{figure}

\begin{figure}[!htb]
\centering
\includegraphics[width=2.8in, height=2.1in]{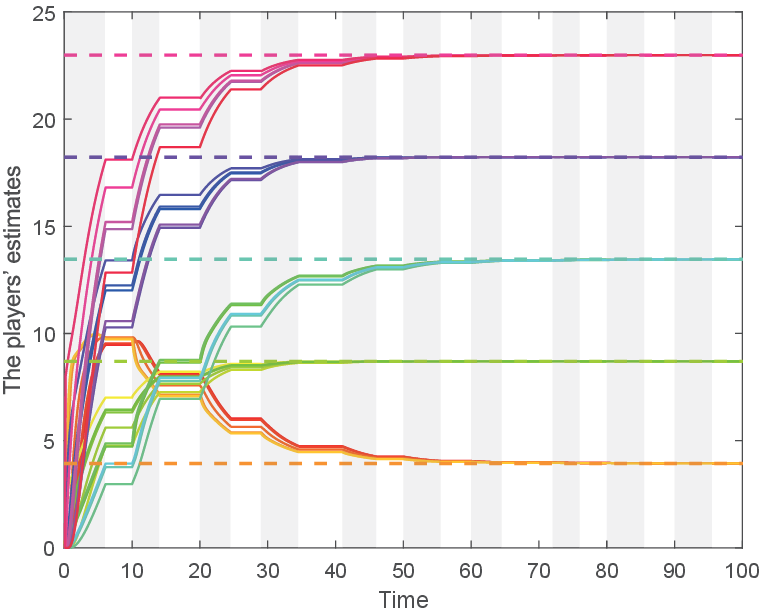}
\caption{The estimates $y_{ij}$ of all players' actions generated by AIC strategy.}\label{aicy}
\end{figure}

Thirdly, the numerical experiment results generated by the proposed AIC strategy with ACR are given in Figs. \ref{aicx1} and \ref{aicy1}. It should be pointed that communication time via conventional AIC strategy is called aperiodic, but in fact, it is quasi-periodic and need communication time widths satisfy the uniform distribution. In the following, we consider the communication time intervals as $[0,7)\cup[10,12)\cup[16,22)\cup[29,33.5)\cup[38,38.5)\cup[48,57.9)\cup[63,69)\cup[76,82)\cup[87,95)$. Obviously, the interval distribution is irregular. Then, $\inf_m\{s_m-t_m\}=\bar{\theta}=0.05$, $\sup_m\{t_{m+1}-t_m\}=\bar{T}=9.9$, $\vartheta=0.5$. The simulation results for \eqref{s1} and \eqref{icl} are shown in Fig. \ref{aicx1}, which illustrate that the players' trajectories converge to the actual Nash equilibrium at length. Moreover, the evolutions of $y_{ij}$ are shown in Fig. \ref{aicy1}. Finally, some related values for three proposed strategies are shown in Table \ref{ta2}, thus verifying the validity of the results.

\begin{figure}[!htb]
\centering
\includegraphics[width=2.8in, height=2.1in]{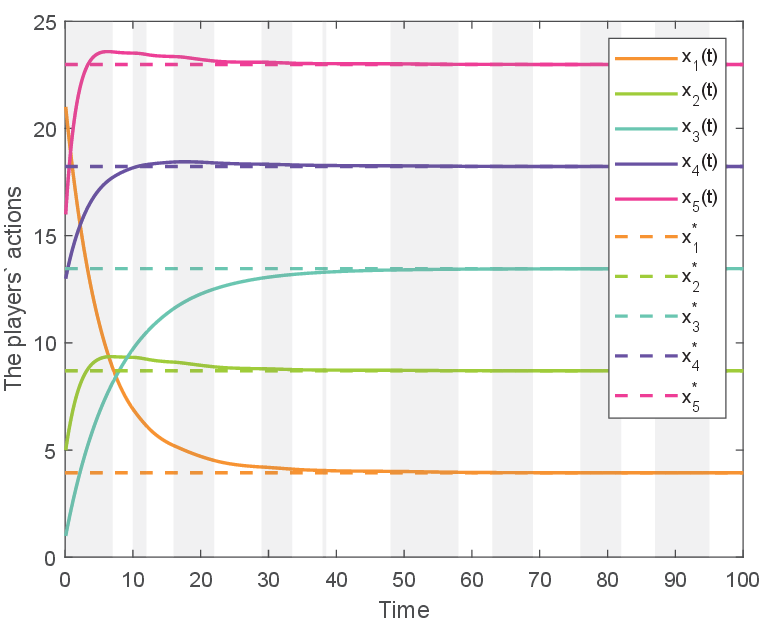}
\caption{The actions $\mathbf{x}_i$ of players generated by AIC strategy with ACR.}\label{aicx1}
\end{figure}

\begin{figure}[!htb]
\centering
\includegraphics[width=2.8in, height=2.1in]{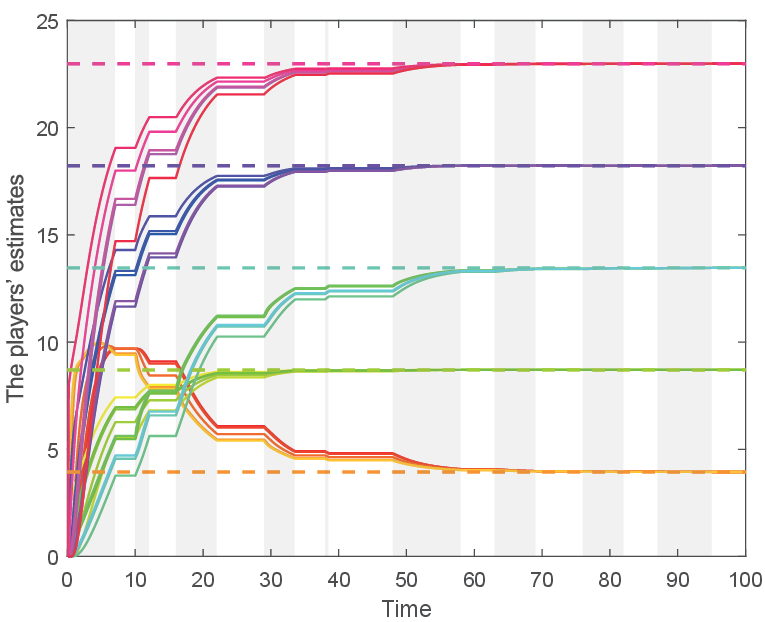}
\caption{The estimates $y_{ij}$ of all players' actions generated by AIC strategy with ACR.}\label{aicy1}
\end{figure}

\begin{table}[!htb]
\caption{\\ COMMUNICATION TIME WIDTH FOR PLAYERS IN EACH PERIOD}\label{ta2}
  \centering
  \renewcommand\arraystretch{1.5}
  \begin{tabular}{|c|c|c|c|}
  \hline
  ~~ & PIC & AIC & AIC with ACR \\ \hline
  Min interval & 5 & 4.5 & 0.5 \\ \hline
  Mean interval & 5 & 5 & 5 \\ \hline
  Max interval & 5 & 5.5 & 9.9 \\ \hline
\end{tabular}
\end{table}

\begin{remark}\label{rk9}
Recently, intermittent communication has been widely used in Renewable power prediction technology \cite{ma2021research,wu2020research,yanqi2020key}. In nature, wind and solar power generation has the characteristics of intermittency and volatility. When large-scale power generation is connected to the grid, it is easy to have an impact on the security of the grid. Therefore, as a research direction of intermittent new energy power generation and grid connection technology, new energy power prediction technology has developed rapidly. Hence, it is meaningful to consider the problem of electrical energy consumption game with intermittent communication.
\end{remark}

\subsection{Connectivity Control Game}
The generation of connectivity control game is based on the requirement of maintaining information sharing among wireless sensors in network monitoring systems. In this section, a connectivity control game \cite{ma2014distributed} is considered where the players' actions are two-dimensional. The communication graph for the players is also depicted in Fig. \ref{cg}. The players' payoff functions are
\begin{equation*}
f_i(\mathbf{x})=\hat{W}^a_i(x_i)+\hat{W}^b_i(\mathbf{x}),
\end{equation*}
where for $i\in\{1,2,3,4,5\}$, $\hat{W}^a_i(x_i)=x_i^{\top}w_{ii}x_i+x_i^{\top}w_i+i$, and $w_{ii}=\mathrm{diag}\{i,i\}$, $w_i=(i,i)^{\top}$. In particular, $k_i$ in \eqref{s1} should be less than zero due to the considered game \cite{li2022distributed}. Moreover,
\begin{equation*}
\begin{split}
&\hat{W}^b_1(\mathbf{x})=\Vert x_1-x_2\Vert^2,\\
&\hat{W}^b_2(\mathbf{x})=\Vert x_2-x_3\Vert^2,\\
&\hat{W}^b_3(\mathbf{x})=\Vert x_3-x_2\Vert^2,\\
&\hat{W}^b_4(\mathbf{x})=\Vert x_4-x_2\Vert^2+\Vert x_4-x_5\Vert^2,\\
&\hat{W}^b_5(\mathbf{x})=\Vert x_5-x_1\Vert^2.
\end{split}
\end{equation*}
For $i\in\{1,2,3,4,5\}$, $j\in\{1,2\}$, $x_{ij}^*=-0.5$ is the unique Nash equilibrium point \cite{ye2019rise}. In the following, we provide numerical results for the AIC strategy with ACR. For notational clarity, let $x_i=(x_{i1},x_{i2})^{\top}\in\mathbb{R}^{2}$, $y_{ij}=(y_{ij1},y_{ij2})^{\top}\in\mathbb{R}^{2}$. In Fig. \ref{aicx2}, $x_{i1}$ and $x_{i2}$ stand for the horizontal coordinates and vertical coordinates, respectively. In Fig. \ref{aicy2}, $y_{ij1}$ and $y_{ij2}$ stand for the horizontal coordinates and vertical coordinates, respectively. Besides, $\mathbf{x}$ and $\mathbf{y}$ can be randomly initialized from $[-15,15]$. The simulation results of the comparison produced by continuous algorithm are given in Figs. \ref{con1} and \ref{con2}.

\begin{figure}[!htb]
\centering
\includegraphics[width=3.8in, height=2.8in]{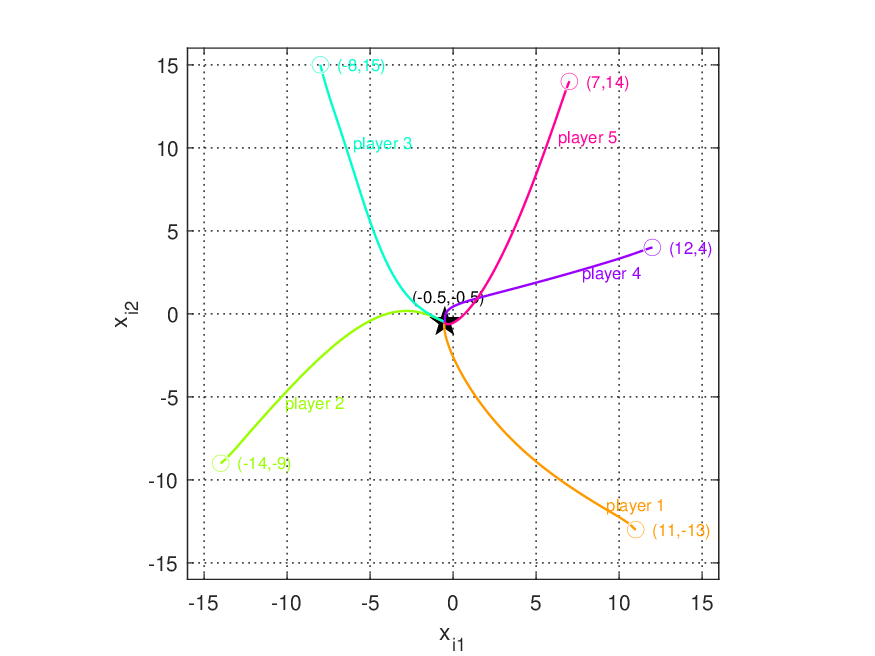}
\caption{The plots of the players' state trajectories generated by continuous algorithm.}\label{con1}
\end{figure}
\begin{figure}[!htb]
\centering
\includegraphics[width=3.5in, height=2.8in]{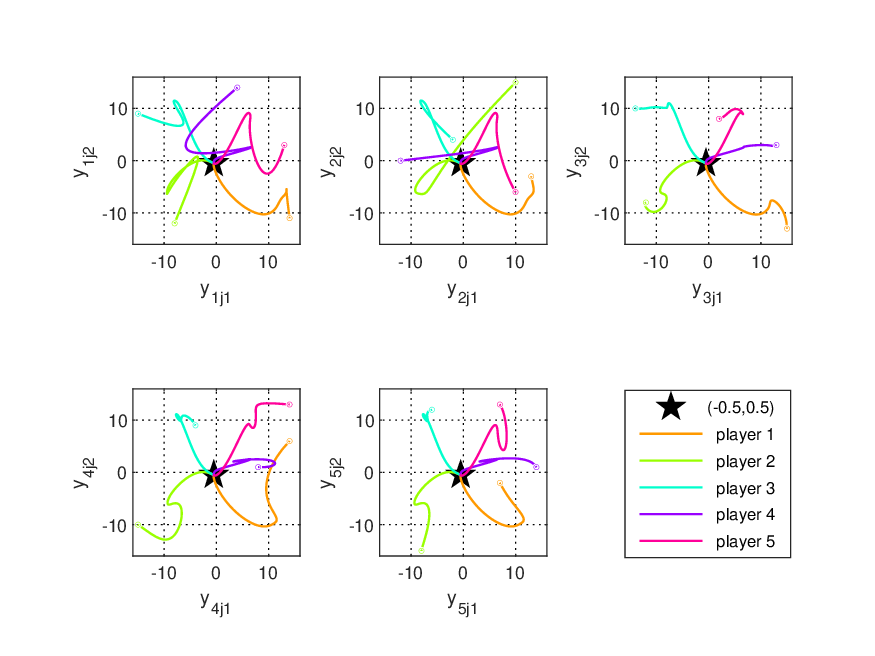}
\caption{The plots of $y_{ij1}$ versus $y_{ij2}$ generated by continuous algorithm.}\label{con2}
\end{figure}

Similarly, we consider the communication time intervals as $[0,7)\cup[10,12)\cup[16,22)\cup[29,33.5)\cup[38,38.5)\cup[48,57.9)\cup[63,69)\cup[76,82)\cup[87,95)$. In this example, we only give the verification of AIC strategy with ACR. In order to simplify the narration, the other two strategies will not be repeated. The simulation results produced by the AIC strategy with ACR $\vartheta=0.5$ in \eqref{icl} are given in Figs. \ref{aicx2} and \ref{aicy2}. From Fig. \ref{aicx2}, it is clear that the proposed AIC strategy with ACR ensures that the players' actions can converge to the Nash equilibrium at length. Fig. \ref{aicy2} illustrates that $\mathbf{y}_i$ also converge to the Nash equilibrium at length. To this end, Theorem \ref{tm3} is verified.

\begin{figure}[!htb]
\centering
\includegraphics[width=3.8in, height=2.8in]{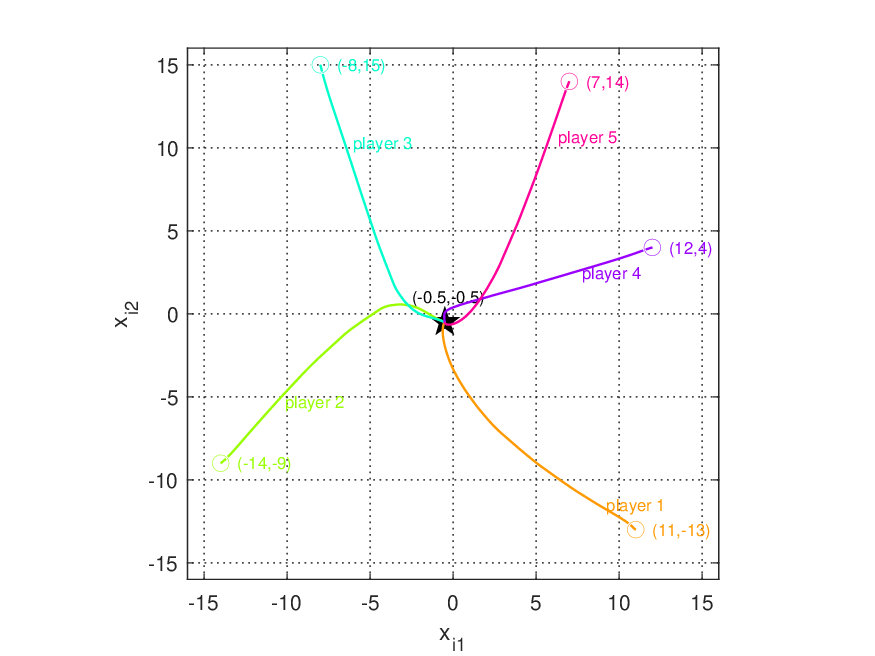}
\caption{The plots of the players' state trajectories generated by \eqref{s1}.}\label{aicx2}
\end{figure}
\begin{figure}[!htb]
\centering
\includegraphics[width=3.5in, height=2.8in]{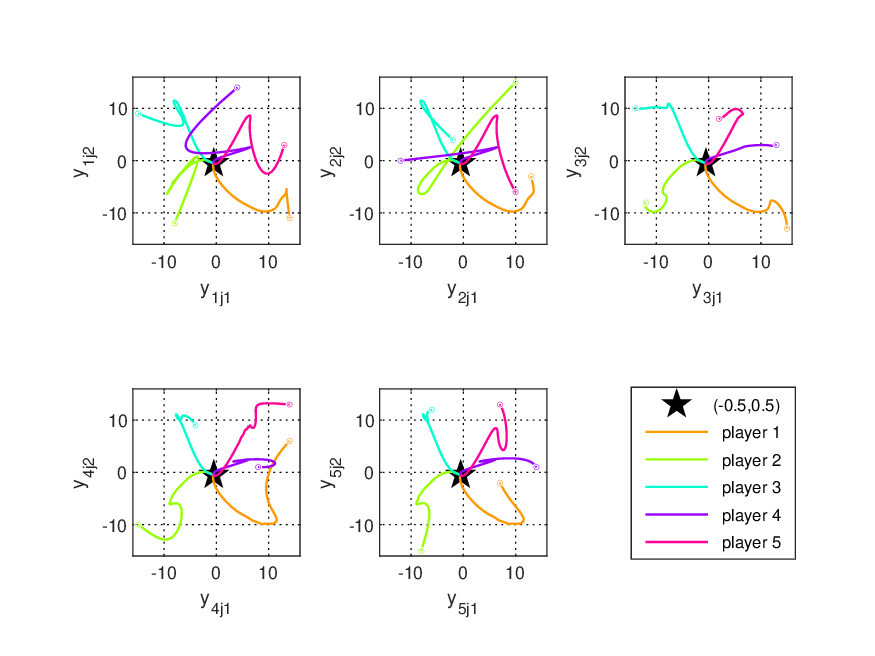}
\caption{The plots of $y_{ij1}$ versus $y_{ij2}$ generated by \eqref{icl}.}\label{aicy2}
\end{figure}

\section{Conclusions}\label{sec5}
This paper investigates Nash equilibrium seeking strategy for networked games, where the players transmit the estimate information with their neighbors intermittently. A PIC strategy, a conventional AIC strategy, and a novel AIC strategy with ACR are proposed. The PIC strategy and the conventional AIC strategy achieve exponential convergence by adjusting communication time intervals to satisfy the relevant conditions. Compared with these two strategies, the AIC strategy with ACR characterizes the distributions of communication time and silent time more reasonably. Based on the method of Lyapunov stability analysis, all strategies in this paper are theoretically validated. In addition, second-order players have drawn some attention. We remain this significant research interests as our further research fields.

\bibliography{NE}




\end{document}